\DeclareMathOperator{\sym}{sym}
\DeclareMathOperator*{\spann}{span}\DeclareMathOperator{\dist}{dist}
\DeclareMathOperator{\tr}{tr}
\DeclareMathOperator{\Pol}{Pol}
\newcommand{\G}{\mathcal{G}}
\newcommand{\R}{\mathbb{R}}\newcommand{\C}{\mathbb{C}}
\begin{document}

\title*{Quasi Monte Carlo integration and kernel-based function approximation on Grassmannians}
\titlerunning{Diffusion polynomials on Grassmannians}
\author{Anna Breger, Martin Ehler, and Manuel Gr\"af}
\institute{Anna Breger \at University of Vienna, Department of Mathematics, Oskar-Morgenstern-Platz 1, A-1090 Vienna,  
\email{anna.breger@univie.ac.at}
\and Martin Ehler \at University of Vienna, Department of Mathematics, Oskar-Morgenstern-Platz 1, A-1090 Vienna,  \email{martin.ehler@univie.ac.at}
\and Manuel Gr\"af \at University of Vienna, Department of Mathematics, Oskar-Morgenstern-Platz 1, A-1090 Vienna,  \email{manuel.graef@univie.ac.at}}

%
%
\maketitle

\abstract{
Numerical integration and function approximation on compact Riemannian manifolds based on eigenfunctions of the Laplace-Beltrami operator have been widely studied in the recent literature. The standard example in numerical experiments is the Euclidean sphere. Here, we derive numerically feasible expressions for the approximation schemes on the Grassmannian manifold, and we present the associated numerical experiments on the Grassmannian. Indeed, our experiments illustrate and match the corresponding theoretical results in the literature.
}

\section{Introduction}
\label{sec:1}

The present paper is dedicated to numerical experiments concerning two classical problems in numerical analysis, numerical integration and function approximation. The novelty of our experiments is that we work on the Grassmannian manifold as an example of a compact Riemannian manifold illustrating theoretical results  in the recent literature. 

Indeed, recent data analysis methodologies involve kernel based approximation of
functions on manifolds and other measure spaces,
cf.~\cite{Maggioni:2008fk,Mhaskar:2010kx} and
\cite{Geller:2011fk,Geller:2012fk,Geller:2013uk}. The kernels are build up by
what is known as diffusion polynomials, which are eigenfunctions of elliptic
differential operators, commonly chosen as the Laplace-Beltrami operator when
dealing with compact Riemannian manifolds.

Numerical implementations of the approximation schemes require pointwise evaluation of the eigenfunctions.  
However, explicit formulas for eigenfunctions are only known in few special cases. 
If the manifold is the unit sphere $\mathbb{S}^{d-1}$, for instance, then the eigenfunctions of the spherical Laplacian are the spherical harmonics, which are indeed polynomials in the usual sense. The corresponding kernels for the sphere have been computed explicitly in \cite{Gia:2006fk,Gia:2008fk}. 

The kernel based approximation requires the computation of the corresponding integral operator, see \eqref{eq:approx sigma} in Section \ref{sec:2}. In the realm of numerical integration, the integral itself is usually approximated by a weighted sum over sample values, see also \cite{Filbir:2010aa,Filbir:2011fk}. The latter fits well to the common scenario when the target function needs to be approximated from a finite sample in the first place. 

Numerical integration on Euclidean spaces is a classical problem in numerical analysis. Recently, Quasi Monte Carlo (QMC) numerical integration on compact Riemannian manifolds has been studied in \cite{Brandolini:2014oz} from a theoretical point of view, see also \cite{Pesenson:2012fp}. If more and more samples are used, then the smoothness parameter of Bessel potential spaces steers the decay of the integration error. QMC integration has been introduced for the sphere in \cite{Brauchart:fk}, where many explicit examples are provided and extensive numerical experiments illustrate the theoretical claims. 

The major aim of the present paper is to provide numerical experiments for the above integration and approximation schemes when the manifold is the Grassmannian, i.e., the collection of $k$-dimensional subspaces in $\R^d$, naturally identified with the collection $\mathcal{G}_{k,d}$ of rank-$k$ orthogonal projectors on $\R^d$, cf.~\cite[Chapter 1]{Chikuse:2003aa}. 

Therefore, we require explicit formulas of the kernels used in \cite{Maggioni:2008fk,Mhaskar:2010kx}. Indeed, the degree of a diffusion polynomial, by definition, relates to the magnitude of the corresponding eigenvalues. We check that diffusion polynomials of degree at most $2t/\sqrt{k}$ are indeed usual multivariate polynomials of degree $t$ restricted to the Grassmannian. The explicit formula for the kernel is derived through generalized Jacobi polynomials. By computing cubature formulas on Grassmannians through some numerical minimization process, we are able to provide numerical experiments for the approximation of functions on Grassmannians and for the QMC integration on Grassmannians supporting the theoretical results in \cite{Brandolini:2014oz,Maggioni:2008fk,Mhaskar:2010kx}.


The outline is as follows: In Section \ref{sec:ints} we recall QMC integration from
\cite{Brandolini:2014oz,Brauchart:fk}, and we recall the approximation
scheme from \cite{Maggioni:2008fk,Mhaskar:2010kx} in the special case of the
Grassmannian in Section \ref{sec:2}. Section \ref{sec:4} provides feasible formulations for numerical experiments. Indeed, Section
\ref{sec:general G} is devoted to derive explicit formulas for the involved
kernel by means of generalized Jacobi polynomials. We check the relations
between diffusion polynomials and ordinary polynomials restricted to the
Grassmannian in Section \ref{sec:diff poly}, and we provide the framework for numerically computing cubatures in Grassmannians in Section \ref{sec:cubs}. The numerical experiments are
provided in Section \ref{sec:nums}.

\section{Quasi Monte Carlo integration}\label{sec:ints}
We identify the Grassmannian, the collection of $k$-dimensional subspaces in $\R^d$, with the set of orthogonal projectors on $\R^d$ of rank $k$, denoted by
\begin{equation*}
\G_{k,d} := \{ P \in \R^{d\times d}_{\sym} : P^{2}=P ;\; \tr(P)=k \}.
\end{equation*}
Here, $\R^{d\times d}_{\sym}$ is the set of symmetric matrices in
$\R^{d \times d}$ and $\tr(P)$ denotes the trace of $P$. The dimension of the Grassmannian is $\dim(\mathcal{G}_{k,d})=k(d-k)$. The canonical Riemannian measure on
$\G_{k,d}$ is denoted by $\mu_{k,d}$, which we assume to be normalized to one. Without loss of generality, we assume $k\leq \frac{d}{2}$ throughout since $\mathcal{G}_{d-k,d}$ can be identified with $\mathcal{G}_{k,d}$.

As a classical problem in numerical analysis, we aim to approximate the integral over a continuous function $f:\mathcal{G}_{k,d}\rightarrow\C$ by a finite sum over weighted samples, i.e., we consider points
$\{P_j\}_{j=1}^n\subset \mathcal{G}_{k,d}$ and nonnegative weights
$\{\omega_j\}_{j=1}^n$ such that
\begin{equation*}
\sum_{j=1}^n\omega_j f(P_j)\approx \int_{\mathcal{G}_{k,d}}f(P)\mathrm d\mu_{k,d}(P).
\end{equation*}
In order to quantify the error by means of the smoothness of $f$, we shall define Bessel potential spaces on $\mathcal{G}_{k,d}$, for which we need some preparation.
Let $\{\varphi_\ell\}_{\ell=0}^\infty$ be the collection of orthonormal
eigenfunctions of the Laplace-Beltrami operator $\Delta$ on $\mathcal{G}_{k,d}$,
and $\{-\lambda_\ell\}_{\ell=0}^\infty$ are the corresponding eigenvalues
arranged, so that $0=\lambda_0\leq \lambda_1\leq \ldots$. Without loss of generality, we choose each $\varphi_\ell$ to be real-valued, in particular, $\varphi_0\equiv 1$. 
The Fourier transform of $f\in L_p(\mathcal{G}_{k,d})$, where $1\leq p\leq\infty$, is defined by 
\begin{equation*}
\hat{f}(\ell):=\int_{\mathcal G_{k,d}}f(P) \varphi_{\ell}(P) \mathrm d \mu_{k,d}(P), \qquad \ell=0,1,2,\ldots.
\end{equation*}
Essentially following \cite{Brandolini:2014oz,Mhaskar:2010kx}, we formally define $(I-\Delta)^{s/2} f$ to be the distribution on $\mathcal{G}_{k,d}$, such that $\langle (I-\Delta)^{s/2} f,\varphi_\ell\rangle = (1+\lambda_\ell)^{s/2}\langle f,\varphi_\ell\rangle$, for all $\ell=0,1,2,\ldots$. The Bessel potential space $H^s_p(\mathcal{G}_{k,d})$, for $1\leq p\leq \infty$ and $s \ge 0$, is
\begin{align*}
H^s_p(\mathcal{G}_{k,d})&:=\{f\in L_p(\mathcal{G}_{k,d}) :  \| f\|_{H^s_p}<\infty\},\quad\text{where}\\
\| f\|_{H^s_p} &:= \| (I-\Delta)^{s/2} f  \|_{L_p},
\end{align*}
i.e., $f\in H^s_p(\mathcal{G}_{k,d})$ if and only if $f\in L_p(\mathcal{G}_{k,d})$ and $(I-\Delta)^s f\in L_p(\mathcal{G}_{k,d})$. Note that this definition is indeed consistent with \cite{Brandolini:2014oz,Mhaskar:2010kx}, see \cite[Theorem 2.1, Definition 2.2]{Brandolini:2014oz} in particular. For $s>k(d-k)/p$ with $1\leq p\leq \infty$, the space $H^s_p(\mathcal{G}_{k,d})$ is embedded into the space of continuous functions on $\mathcal{G}_{k,d}$, see, for instance,  \cite{Brandolini:2014oz}. For $1<p<\infty$, this embedding also follows from results on Bessel potential spaces on general Riemannian manifolds with bounded geometry, cf.~\cite[Theorem 7.4.5, Section 7.4.2]{Triebel:1992aa}, and on $\R^d$ with $1\leq p\leq \infty$, see \cite[Chapter V, 6.11]{Stein:1970yg}. According to \cite{Brandolini:2014oz}, for any sequence of points 
$\{P^t_j\}_{j=1}^{n_t}\subset\mathcal{G}_{k,d}$, $t=0,1,2,\ldots$, and positive weights
$\{\omega^t_j\}_{j=1}^{n_t}\subset\R$ with $n_t\rightarrow \infty$, there is a function $f\in H^s_p(\mathcal{G}_{k,d})$
\renewcommand*{\thefootnote}{\fnsymbol{footnote}}\setcounter{footnote}{1} such that \footnote{ We use the notation
  $\gtrsim$, meaning the right-hand side is less or equal to the left-hand side
  up to a positive constant factor. The symbol $\lesssim$ is used analogously,
  and $\asymp$ means both hold, $\lesssim$ and $\gtrsim$. If not explicitly
  stated, the dependence or independence of the constants shall be clear from
  the context.}
\begin{equation}\label{eq:negativ}
\Big |\int_{\mathcal{G}_{k,d}} f(P)\mathrm d\mu_{k,d}(P) -\sum_{j=1}^{n_t}\omega^t_j f(P^t_j) \Big| \gtrsim n_t^{-\frac{s}{k(d-k)}}\|f\|_{H^s_p},
\end{equation}
where the constant does not depend on $t$. Thus, we cannot do any better than
the rate
\begin{equation*}
n_t^{-\frac{s}{k(d-k)}}.
\end{equation*}

In order to quantify the quality of weighted point sequences
$\{(P^t_j,\omega^t_j)\}_{j=1}^{n_t}$, $t=0,1,2,\dots$, for numerical
integration, we make the following definition, whose analogous formulation on
the sphere (with constant weights) is due to \cite{Brauchart:fk}.
\begin{definition}
Given $s>k(d-k)/p$, a sequence $\{(P^t_j,\omega^t_j)\}_{j=1}^{n_t}$, $t=0,1,2,\ldots$,  of $n_t$ points in $\mathcal{G}_{k,d}$ and positive weights with $n_t\rightarrow\infty$ is called a \emph{sequence of Quasi Monte Carlo (QMC) systems for} $H^s_p(\mathcal{G}_{k,d})$ if
\begin{equation*}
\Big |\int_{\mathcal{G}_{k,d}} f(P)\mathrm d\mu_{k,d}(P)  -\sum_{j=1}^{n_t}\omega^t_j f(P^t_j)\Big| \lesssim n_t^{-\frac{s}{k(d-k)}}\|f\|_{H^s_p}
\end{equation*}
holds for all $f\in H^s_p(\mathcal{G}_{k,d})$.  
\end{definition}

In case $p=2$, given $s>k(d-k)/2$, any sequence of QMC systems  $\{(P^t_j,\omega^t_j)\}_{j=1}^{n_t}$ for $H^s_2(\mathcal{G}_{k,d})$ is also a sequence of QMC systems for $W^{s'}_2(\mathcal{G}_{k,d})$, for all $s'$ satisfying $s\geq s'>k(d-k)/2$, cf.~\cite{Brandolini:2014oz}. 

Especially for the integration of smooth functions, random points lack quality when compared to QMC systems. 
\begin{proposition}\label{th:random inde}
For $s>k(d-k)/2$, suppose $P_1,\ldots,P_n$ are random points on $\mathcal{G}_{k,d}$, independently identically distributed according to $\mu_{k,d}$ then it holds
\begin{equation*}
\sqrt{\mathbb{E} \Big[\sup_{\substack{f\in H^s_2(\mathcal{G}_{k,d})\\ \|f\|_{H^s_2}\leq 1}}  \Big|\int_{\mathcal{G}_{k,d}} f(P)\mathrm d\mu_{k,d}(P) - \frac{1}{n}\sum_{j=1}^n f(P_j)  \Big|^2\Big]}  = cn^{-\frac{1}{2}}
\end{equation*}
with $c^2=\sum_{\ell=1}^\infty (1+\lambda_\ell)^{-s}$.
\end{proposition}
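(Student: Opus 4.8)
The plan is to recognize $H^s_2(\mathcal{G}_{k,d})$ as a reproducing kernel Hilbert space (RKHS) and to identify the worst-case integration error with the norm of the Riesz representer of the integration-error functional. By Parseval's identity the norm is $\|f\|_{H^s_2}^2=\sum_{\ell=0}^\infty(1+\lambda_\ell)^s|\hat f(\ell)|^2$, so that $\{(1+\lambda_\ell)^{-s/2}\varphi_\ell\}_{\ell\ge 0}$ is an orthonormal basis of $H^s_2(\mathcal{G}_{k,d})$ and $\langle f,g\rangle_{H^s_2}=\sum_{\ell}(1+\lambda_\ell)^s\hat f(\ell)\overline{\hat g(\ell)}$. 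Since $s>k(d-k)/2$, the embedding $H^s_2(\mathcal{G}_{k,d})\hookrightarrow C(\mathcal{G}_{k,d})$ recalled above makes every point evaluation $f\mapsto f(P)$ a bounded linear functional; by the Riesz representation theorem it is represented by a function $K_s(\cdot,P)\in H^s_2(\mathcal{G}_{k,d})$ whose Fourier coefficients are forced to be $\widehat{K_s(\cdot,P)}(\ell)=(1+\lambda_\ell)^{-s}\varphi_\ell(P)$, i.e.\ $K_s(P,Q)=\sum_{\ell}(1+\lambda_\ell)^{-s}\varphi_\ell(P)\varphi_\ell(Q)$. Setting $P=Q$ and integrating (the integrand being nonnegative, Tonelli applies) gives $\sum_{\ell}(1+\lambda_\ell)^{-s}=\int_{\mathcal{G}_{k,d}}K_s(P,P)\,\mathrm d\mu_{k,d}(P)\le\sup_P K_s(P,P)<\infty$, since $K_s(P,P)$ is continuous on the compact manifold $\mathcal{G}_{k,d}$; equivalently, Weyl's asymptotics $\lambda_\ell\asymp\ell^{2/(k(d-k))}$ yield the same finiteness precisely when $s>k(d-k)/2$. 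In particular $c<\infty$.

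Next I would rewrite the integration-error functional. For fixed $P_1,\dots,P_n$ put $\mathrm{err}(f):=\int_{\mathcal{G}_{k,d}}f\,\mathrm d\mu_{k,d}-\frac1n\sum_{j=1}^nf(P_j)$, a bounded linear functional on $H^s_2(\mathcal{G}_{k,d})$, represented by $\xi:=\int_{\mathcal{G}_{k,d}}K_s(\cdot,P)\,\mathrm d\mu_{k,d}(P)-\frac1n\sum_{j=1}^nK_s(\cdot,P_j)$, equivalently the element of $H^s_2(\mathcal{G}_{k,d})$ with Fourier coefficients $\widehat\xi(\ell)=(1+\lambda_\ell)^{-s}\big(\int\varphi_\ell\,\mathrm d\mu_{k,d}-\frac1n\sum_j\varphi_\ell(P_j)\big)$. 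Since $\varphi_0\equiv1$ and $\mu_{k,d}$ is a probability measure, $\int\varphi_\ell\,\mathrm d\mu_{k,d}=\langle\varphi_\ell,\varphi_0\rangle_{L_2}=\delta_{\ell,0}$, hence $\widehat\xi(0)=0$ and $\widehat\xi(\ell)=-\frac1n(1+\lambda_\ell)^{-s}\sum_j\varphi_\ell(P_j)$ for $\ell\ge 1$. By the Riesz identity and Parseval,
\[
\sup_{\|f\|_{H^s_2}\le 1}\big|\mathrm{err}(f)\big|^2=\|\xi\|_{H^s_2}^2=\sum_{\ell=1}^\infty(1+\lambda_\ell)^s\,\widehat\xi(\ell)^2=\frac1{n^2}\sum_{\ell=1}^\infty(1+\lambda_\ell)^{-s}\Big(\sum_{j=1}^n\varphi_\ell(P_j)\Big)^2,
\]
which in particular exhibits the supremum as a measurable function of $(P_1,\dots,P_n)$.

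Finally I would take expectations. By Tonelli the expectation moves inside the sum over $\ell$, and for each $\ell\ge1$ independence of the $P_j$ together with $\mathbb{E}\varphi_\ell(P_j)=\int\varphi_\ell\,\mathrm d\mu_{k,d}=0$ and $\mathbb{E}\varphi_\ell(P_j)^2=\int\varphi_\ell^2\,\mathrm d\mu_{k,d}=1$ give $\mathbb{E}\big[(\sum_j\varphi_\ell(P_j))^2\big]=n$. Therefore
\[
\mathbb{E}\Big[\sup_{\|f\|_{H^s_2}\le 1}\big|\mathrm{err}(f)\big|^2\Big]=\frac1{n^2}\sum_{\ell=1}^\infty(1+\lambda_\ell)^{-s}\cdot n=\frac1n\sum_{\ell=1}^\infty(1+\lambda_\ell)^{-s}=\frac{c^2}{n},
\]
and taking the square root yields the claim. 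The only genuinely non-formal point is the first paragraph: one must justify that $H^s_2(\mathcal{G}_{k,d})$ is a RKHS with the stated kernel and that $c<\infty$, both of which rest on the embedding $H^s_2(\mathcal{G}_{k,d})\hookrightarrow C(\mathcal{G}_{k,d})$ valid for $s>k(d-k)/2$ (equivalently, on the eigenvalue-counting asymptotics). Everything afterwards is bookkeeping with Parseval's identity and the elementary second-moment computation for i.i.d.\ samples.
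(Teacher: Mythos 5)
Your argument is correct and is essentially the route the paper points to (the RKHS worst-case-error computation of Brauchart et al.\ and Gr\"af, reflected in the paper's formula \eqref{eq:wce_formula}): you identify $H^s_2(\mathcal{G}_{k,d})$ as an RKHS with the Bessel kernel, express the worst-case error as the norm of the Riesz representer, and take expectations using $\mathbb{E}\varphi_\ell(P_j)=0$, $\mathbb{E}\varphi_\ell(P_j)^2=1$ for $\ell\geq 1$. Working coefficient-wise via Parseval instead of with the kernel double sum is only a cosmetic difference, so no further comparison is needed.
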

Note that the condition $s>k(d-k)/2$ implies that
$\frac{s}{k(d-k)}>\frac{1}{2}$, so that on average QMC systems indeed perform
better than random points for smooth functions. The proof of Proposition \ref{th:random inde} is derived by following the lines
in \cite{Brauchart:fk}. In fact, the result is already contained in
\cite[Corollary 2.8]{Graf:2013zl}, see also \cite{Nowak:2010rr}, within a more
general setting.


In order to derive QMC systems, we shall have a closer look at cubature points, for which we need the space of
\emph{diffusion polynomials} of degree at most $t$, defined by
\begin{equation}\label{eq:poly diff}
\Pi_t:=\spann\{\varphi_\ell:\lambda_\ell\leq  t^2 \},
\end{equation}
see \cite{Mhaskar:2010kx} and references therein. 
\begin{definition}
  For $\{P_j\}_{j=1}^n\subset\mathcal{G}_{k,d}$ and positive weights
  $\{\omega_j\}_{j=1}^n$, we say that $\{(P_j,\omega_j)\}_{j=1}^n$ is a
  \emph{cubature for $\Pi_t$} if 
  \begin{equation}\label{eq:def cub 00}
    \int_{\mathcal{G}_{k,d}} f(P)\mathrm d\mu_{k,d}(P) = \sum_{j=1}^n \omega_j f(P_j),\quad \text{for all } f\in  \Pi_t.
  \end{equation}
  The number $t$ refers to the \emph{strength} of the cubature.
\end{definition}
In the following result, cf.~\cite[Theorem 2.12]{Brandolini:2014oz}, the cubature error is bounded by the  cubature strength $t$, not the number of points. 
\begin{theorem}\label{th:brandolini}
Suppose $s>k(d-k)/p$ and assume that $\{(P^t_j,\omega^t_j)\}_{j=1}^{n_t}$ is a cubature for $\Pi_t$. Then we have, for $f\in H^s_p(\mathcal{G}_{k,d})$,
\begin{equation*}
\Big|\int_{\mathcal{G}_{k,d}} f(P)\mathrm d\mu_{k,d}(P)-\sum_{j=1}^{n_t} \omega^t_j f(P^t_j)\Big| \lesssim  t^{-s}\|f\|_{H^s_p}.
\end{equation*}
\end{theorem}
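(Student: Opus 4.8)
The plan is to bound the cubature error by the error of best approximation of $f$ by diffusion polynomials in $\Pi_t$, and then to invoke a Jackson-type estimate for that approximation error.

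First, I would exploit exactness on $\Pi_t$. Since $\varphi_0\equiv 1\in\Pi_t$ and the cubature is exact on $\Pi_t$, one gets $\sum_{j=1}^{n_t}\omega_j^t=\int_{\mathcal{G}_{k,d}}1\,\mathrm d\mu_{k,d}=1$; together with $\mu_{k,d}$ being a probability measure and the weights being positive, this means that both the integration functional $f\mapsto\int_{\mathcal{G}_{k,d}}f\,\mathrm d\mu_{k,d}$ and the cubature functional $f\mapsto\sum_j\omega_j^t f(P_j^t)$ have norm $1$ on the space of continuous functions on $\mathcal{G}_{k,d}$. Hence, for every $g\in\Pi_t$, using $\int_{\mathcal{G}_{k,d}}g\,\mathrm d\mu_{k,d}=\sum_j\omega_j^t g(P_j^t)$,
\begin{equation*}
\Big|\int_{\mathcal{G}_{k,d}}f\,\mathrm d\mu_{k,d}-\sum_{j=1}^{n_t}\omega_j^t f(P_j^t)\Big|=\Big|\int_{\mathcal{G}_{k,d}}(f-g)\,\mathrm d\mu_{k,d}-\sum_{j=1}^{n_t}\omega_j^t (f-g)(P_j^t)\Big|\leq 2\|f-g\|_{L_\infty}.
\end{equation*}
Taking the infimum over $g\in\Pi_t$ bounds the left-hand side by $2\,E_t(f)_\infty$, where $E_t(f)_\infty:=\inf\{\|f-g\|_{L_\infty}:g\in\Pi_t\}$ is the best $\Pi_t$-approximation in the uniform norm. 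Here the hypothesis $s>k(d-k)/p$ is used only to ensure, via the stated embedding $H^s_p(\mathcal{G}_{k,d})\hookrightarrow C(\mathcal{G}_{k,d})$, that pointwise evaluation of $f$ makes sense.

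Second, it remains to establish the Jackson-type inequality $E_t(f)_\infty\lesssim t^{-s}\|f\|_{H^s_p}$ for $s>k(d-k)/p$. The standard route is to fix $\eta\in C^\infty(\mathbb{R})$ with $\eta\equiv1$ on $[0,1/2]$ and $\supp\eta\subset[0,1)$, and to set $g_t:=\sum_\ell\eta(\sqrt{\lambda_\ell}/t)\hat f(\ell)\varphi_\ell$, which lies in $\Pi_t$. Then $f-g_t=\sum_\ell\big(1-\eta(\sqrt{\lambda_\ell}/t)\big)\hat f(\ell)\varphi_\ell$ is the image of $(I-\Delta)^{s/2}f\in L_p(\mathcal{G}_{k,d})$ under the Fourier multiplier associated with $m_t(\sqrt{\lambda}):=(1-\eta(\sqrt{\lambda}/t))(1+\lambda)^{-s/2}$, which is supported on frequencies $\lambda\gtrsim t^2$. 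One then estimates $\|f-g_t\|_{L_\infty}\lesssim\|m_t(\sqrt{-\Delta})\|_{L_p\to L_\infty}\,\|f\|_{H^s_p}$, and the operator norm of $m_t(\sqrt{-\Delta})$ is $\lesssim t^{-s}$ by standard kernel bounds for band-limited functions of the Laplace–Beltrami operator on a compact manifold, the condition $s>k(d-k)/p$ being exactly what renders this $L_p\to L_\infty$ bound finite. Since this is precisely the approximation machinery underlying \cite{Brandolini:2014oz,Mhaskar:2010kx}, in the present setting one may alternatively quote the corresponding Jackson inequality from there directly.

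The hard part is the second step: producing (or correctly invoking) the estimate $E_t(f)_\infty\lesssim t^{-s}\|f\|_{H^s_p}$ with the right powers of $t$ and the right role of $s$. The first step is soft — exactness plus the triangle inequality and the normalization $\sum_j\omega_j^t=1$ — whereas the genuine harmonic analysis, and the only place where the geometry of $\mathcal{G}_{k,d}$ (through its dimension $k(d-k)$) and the hypothesis $s>k(d-k)/p$ truly enter, is the control of the $L_p\to L_\infty$ mapping norm of a band-limited function of $\Delta$ on the Grassmannian. Because that estimate is available off the shelf from \cite{Brandolini:2014oz,Mhaskar:2010kx}, the proof effectively reduces to the reduction carried out in the first paragraph followed by a citation.
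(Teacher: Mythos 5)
The paper itself offers no proof of this statement (it is quoted from \cite[Theorem 2.12]{Brandolini:2014oz}), but your reconstruction has a genuine gap for $1\le p<\infty$. Your first step is correct but lossy: positivity of the weights and exactness on $\varphi_0\equiv 1$ do give $\bigl|\int_{\mathcal{G}_{k,d}} f\,\mathrm d\mu_{k,d}-\sum_j\omega_j^t f(P_j^t)\bigr|\le 2\inf_{g\in\Pi_t}\|f-g\|_{L_\infty}$. The problem is the second step: the mixed-norm Jackson inequality $\inf_{g\in\Pi_t}\|f-g\|_{L_\infty}\lesssim t^{-s}\|f\|_{H^s_p}$ is false for finite $p$; measuring the error in $L_\infty$ while measuring smoothness in $L_p$ necessarily costs a factor $t^{k(d-k)/p}$. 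Indeed, the $L_p\to L_\infty$ norm of your multiplier $m_t(\sqrt{-\Delta})$ is $\sup_P\|\,$kernel$(P,\cdot)\|_{L_{p'}}$, and the dyadic pieces at frequency $u\ge t$ have $L_{p'}$-kernel norms $\asymp u^{k(d-k)/p}$, so the bound one actually gets is $\asymp t^{-(s-k(d-k)/p)}$; the hypothesis $s>k(d-k)/p$ only makes it finite, it does not upgrade it to $t^{-s}$. A concrete obstruction: let $g_t$ be a smoothed band-limited kernel concentrated at a point with spectrum in eigenvalue range $[t^2,4t^2]$, so that $\|g_t\|_{L_1}\asymp 1$, $\|g_t\|_{L_2}^2\asymp t^{k(d-k)}$, $\|g_t\|_{H^s_p}\asymp t^{s+k(d-k)(1-1/p)}$; since $g_t\perp\Pi_{t-1}$, for every $g\in\Pi_{t-1}$ one has $\|g_t-g\|_{L_\infty}\ge\langle g_t-g,g_t\rangle/\|g_t\|_{L_1}\asymp t^{k(d-k)}$, whence $\inf_{g\in\Pi_{t-1}}\|g_t-g\|_{L_\infty}/\|g_t\|_{H^s_p}\gtrsim t^{-(s-k(d-k)/p)}$. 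So your route can only yield the weaker rate $t^{-(s-k(d-k)/p)}\|f\|_{H^s_p}$, i.e., it proves the theorem only in the case $p=\infty$ (where it is indeed the standard argument, via Theorem \ref{th:00} with $p=\infty$). There is also nothing "off the shelf" of the form you cite: the Jackson estimate in \cite{Mhaskar:2010kx} (Theorem \ref{th:00} here) is same-index, $\dist(f,\Pi_t)_{L_p}\lesssim t^{-s}\|f\|_{H^s_p}$, not an $L_\infty$-error bound from $L_p$-smoothness.

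The underlying issue is that bounding the error functional by $2\|f-g\|_{L_\infty}$ throws away exactly the structure that makes the theorem true for all $p$: the error functional is the difference of two positive unit measures that agree on $\Pi_t$, and the proof in \cite{Brandolini:2014oz} exploits this quantitatively (e.g., exactness with positive weights forces local mass bounds of the type $\sum_{P_j\in\mathbb{B}_{1/t}(P)}\omega_j\lesssim t^{-k(d-k)}$, obtained by testing against localized band-limited bumps, which are then combined with localized kernel estimates for a Littlewood--Paley-type decomposition of $f$, or equivalently with a duality estimate in $H^{-s}_{p'}$). That localization argument is what avoids the $t^{k(d-k)/p}$ loss; to repair your proof for finite $p$ you would need to replace the soft sup-norm reduction by an argument of this kind rather than by any best-approximation estimate alone.
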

Weyl's estimates on the spectrum of an elliptic operator yield 
\begin{equation*}
\dim(\Pi_t)\asymp t^{k(d-k)},
\end{equation*}
cf.~\cite[Theorem 17.5.3]{Hormander:1983gf}. This implies that any sequence of cubatures $\{(P^t_j,\omega^t_j)\}_{j=1}^{n_t}$ of strength $t$, respectively, must obey 
$
n_t\gtrsim t^{k(d-k)}
$ 
 asymptotically in $t$, cf.~\cite{Harpe:2005fk}. There are indeed sequences of cubatures $\{(P^t_j,\omega^t_j)\}_{j=1}^{n_t}$ of strength $t$, respectively, satisfying 
 \begin{equation}\label{eq:few cubs2}
n_t\asymp t^{k(d-k)},
\end{equation}
cf.~\cite{Harpe:2005fk}. In this case, Theorem \ref{th:brandolini} leads to
\begin{equation}\label{eq:general cub fomula}
\Big|\int_{\mathcal{G}_{k,d}} f(P)\mathrm d\mu_{k,d}(P)-\sum_{j=1}^{n_t} \omega^t_j f(P^t_j)\Big| \lesssim  n_t^{-\frac{s}{k(d-k)}}\|f\|_{H^s_p},
\end{equation}
so that we have settled that QMC systems do exist, for any $s>k(d-k)/p$, and can be derived via cubatures.

\begin{remark}
Cubature points $\{P_j\}_{j=1}^n$ for $\Pi_t$ with constant weights $\omega_j=\frac{1}{n}$ are called $t$-designs. 
  For all $t=1,2,\ldots$, there exist $t$-designs, cf.~\cite{Seymour:1984bh}.
The results in \cite{Bondarenko:2011kx} imply that there are $t$-designs satisfying \eqref{eq:few cubs2} provided that $k=1$. However, for $2\leq k\leq \frac{d}{2}$, it is still an open problem if the asymptotics
  \eqref{eq:few cubs2} can be achieved by $t$-designs in place of cubatures.
\end{remark}

\section{Approximation by diffusion kernels}
\label{sec:2}

One example, where integrals over the Grassmannian are replaced with weighted finite sums, is the approximation of a function $f:\mathcal{G}_{k,d}\rightarrow\C$ from finitely many samples. The approximation scheme developed in \cite{Maggioni:2008fk,Mhaskar:2010kx} works for manifolds and metric measure spaces in general, but we shall restrict the presentation to the Grassmannian. 

For a function $f \in L_{p}(\mathcal G_{k,d})$, we denote the
(polynomial) \emph{best approximation error} by
\begin{equation*}
  \dist(f,\Pi_{t})_{L_p}:=\inf_{g\in\Pi_{t}} \|f-g\|_{L_p},
\end{equation*}
where $t\geq 0$. 
It is possible to quantify the best approximation error in dependence of the
function's smoothness, see \cite[Proposition 5.3]{Mhaskar:2010kx} for the following result. 
\begin{theorem}\label{th:00}
If $f\in H^s_p(\mathcal{G}_{k,d})$, then 
\begin{equation*}
\dist(f,\Pi_t)_{L_p}\lesssim  t^{-s} \|f\|_{H^s_p}.
\end{equation*}
\end{theorem}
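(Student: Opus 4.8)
\emph{Proof idea.} The plan is to construct a smooth spectral cutoff operator reproducing low-degree diffusion polynomials, following \cite{Maggioni:2008fk,Mhaskar:2010kx}. Fix $h\in C^\infty([0,\infty))$ with $h\equiv 1$ on $[0,\tfrac12]$, $h\equiv 0$ on $[1,\infty)$ and $0\le h\le 1$, set $\eta(x):=h(\tfrac{x}{2})-h(x)$ (smooth, supported in $[\tfrac12,2]$), and for $t\ge 1$, $\tau\ge 1$ define
\begin{equation*}
\sigma_t(f):=\sum_{\ell=0}^\infty h\!\Big(\tfrac{\sqrt{\lambda_\ell}}{t}\Big)\hat f(\ell)\,\varphi_\ell,
\qquad
T_\tau(f):=\sum_{\ell=0}^\infty \eta\!\Big(\tfrac{\sqrt{\lambda_\ell}}{\tau}\Big)\hat f(\ell)\,\varphi_\ell.
\end{equation*}
Only finitely many $\lambda_\ell$ enter each sum, and since $h$ vanishes on $[1,\infty)$ we get $\sigma_t(f)\in\Pi_t$ by \eqref{eq:poly diff}. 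Telescoping $1-h(\sqrt{\lambda}/t)=\sum_{j\ge0}\eta\big(\sqrt{\lambda}/(2^{j}t)\big)$ on the spectrum of $-\Delta$ then yields the Littlewood--Paley-type identity $f-\sigma_t(f)=\sum_{j\ge 0}T_{2^{j}t}(f)$, with $L_p$-convergence to be justified once the individual norms are shown summable.

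The single analytic input is the spectral multiplier estimate established for compact Riemannian manifolds in \cite{Maggioni:2008fk,Mhaskar:2010kx}, which applies in particular to $\mathcal{G}_{k,d}$: if $\{\Phi_u\}_{u\ge1}$ is a family in $C^N([0,\infty))$ with supports contained in one fixed compact set and $\sup_u\|\Phi_u\|_{C^N}<\infty$, then $\sup_{u\ge1}\|\Phi_u(u^{-1}\sqrt{-\Delta})\|_{L_p\to L_p}<\infty$; this rests on the fast (polynomial or Gaussian) localization of the associated kernels at scale $u^{-1}$, obtained from heat-kernel / finite-propagation-speed bounds together with the smoothness of $\Phi_u$. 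To use it, put $F:=(I-\Delta)^{s/2}f$, so that $\hat f(\ell)=(1+\lambda_\ell)^{-s/2}\hat F(\ell)$ and $\|F\|_{L_p}=\|f\|_{H^s_p}$. On the set where $\eta(\sqrt{\lambda_\ell}/\tau)\ne0$ one has $\tau^2/4\le\lambda_\ell\le4\tau^2$, hence $(1+\lambda_\ell)^{-s/2}\asymp\tau^{-s}$ uniformly for $\tau\ge1$, and a direct computation gives $T_\tau(f)=\tau^{-s}\,\Psi_\tau(\tau^{-1}\sqrt{-\Delta})F$ with $\Psi_\tau(x):=\eta(x)\,(\tau^{-2}+x^2)^{-s/2}$. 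The functions $\Psi_\tau$ are supported in $[\tfrac12,2]$ with $C^N$-norms bounded uniformly in $\tau\ge1$, so the multiplier estimate yields $\|T_\tau(f)\|_{L_p}\lesssim\tau^{-s}\|F\|_{L_p}=\tau^{-s}\|f\|_{H^s_p}$.

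Summing the geometric series (which converges for $s>0$; the case $s=0$ is trivial) gives, for $t\ge 1$,
\begin{equation*}
\dist(f,\Pi_t)_{L_p}\le\|f-\sigma_t(f)\|_{L_p}\le\sum_{j\ge 0}\|T_{2^{j}t}(f)\|_{L_p}
\lesssim\|f\|_{H^s_p}\sum_{j\ge 0}(2^{j}t)^{-s}\lesssim t^{-s}\|f\|_{H^s_p},
\end{equation*}
while for $0<t<1$ one simply uses $0\in\Pi_t$ and the embedding $H^s_p\hookrightarrow L_p$ to get $\dist(f,\Pi_t)_{L_p}\le\|f\|_{L_p}\lesssim\|f\|_{H^s_p}\le t^{-s}\|f\|_{H^s_p}$. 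The main obstacle is precisely the spectral multiplier / kernel-localization estimate invoked in the second paragraph; on the Grassmannian it could alternatively be obtained from the explicit Jacobi-polynomial form of the zonal kernels derived in Section~\ref{sec:general G}, but the general machinery of \cite{Maggioni:2008fk,Mhaskar:2010kx} already suffices.
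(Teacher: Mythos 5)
The paper gives no proof of this theorem at all -- it is quoted directly from Mhaskar's Proposition 5.3 -- and your argument is a correct reconstruction of the standard proof in that reference: a dyadic Littlewood--Paley decomposition $f-\sigma_t(f)=\sum_{j\ge0}T_{2^jt}(f)$ combined with the uniform $L_p$ spectral-multiplier (kernel-localization) estimate for smooth, compactly supported multipliers of $\sqrt{-\Delta}$. The individual reductions you make -- $\sigma_t(f)\in\Pi_t$, the identity $T_\tau(f)=\tau^{-s}\Psi_\tau\big(\tau^{-1}\sqrt{-\Delta}\big)(I-\Delta)^{s/2}f$ with $C^N$-norms of $\Psi_\tau$ bounded uniformly in $\tau\ge1$, the geometric summation for $s>0$, and the trivial treatment of small $t$ -- are all sound, so this is essentially the same route as the cited source.
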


Given $f\in H^s_p(\mathcal{G}_{k,d})$ we now construct a particular
sequence of functions $\sigma_t(f)\in\Pi_t$, $t=1,2,\ldots$, that realizes this
best approximation rate. Note that, since the collection $\{\varphi_\ell\}_{\ell=0}^\infty$ is an orthonormal
basis for $L_2(\mathcal{G}_{k,d})$, any function $f\in L_2(\mathcal{G}_{k,d})$ can be expanded as a Fourier series
by 
\begin{equation*}
f =\sum_{\ell=0}^\infty \hat{f}(\ell) \varphi_\ell.
\end{equation*}
The approach in
\cite{Maggioni:2008fk,Mhaskar:2010kx} makes use of a smoothly truncated Fourier
expansion of $f$, 
\begin{equation*}
  \sigma_t(f) := \sum_{\ell=0}^\infty h(t^{-2}\lambda_\ell) \hat{f}(\ell) \varphi_\ell\in\Pi_t,
\end{equation*}
where $h:\R_{\geq 0}\rightarrow\R$ is an infinitely often differentiable and
nonincreasing function with $h(x)=1$, for $x\le 1/2$, and $h(x)=0$, for
$x\ge 1$. Using the kernel $K_t$ on $\G_{k,d} \times \G_{k,d}$ defined by
\begin{equation}\label{eq:sigma again and new}
  K_t(P,Q)= \sum_{\ell=0}^\infty h(t^{-2}\lambda_\ell) \varphi_\ell(P)\varphi_\ell(Q)
\end{equation}
we arrive after interchanging summation and integration at the following
alternative representation
\begin{equation}\label{eq:approx sigma}
\sigma_{t}(f) = \int_{\mathcal{G}_{k,d}} f(P) K_t(P,\cdot) \mathrm d\mu_{k,d}(P).
\end{equation}
Note, that the function $\sigma_{t}(f)$ is well-defined for general
$f\in L_p(\mathcal{G}_{k,d})$, $1\le p \le \infty$, and it turns out that
$\sigma_t(f)$ approximates $f$ up to a constant as good as the best approximation from $\Pi_t$,
cf.~\cite[Proposition 5.3]{Mhaskar:2010kx}.
\begin{theorem}\label{th:0}
If $f\in H^s_p(\mathcal{G}_{k,d})$, then 
\begin{equation*}
 \|f - \sigma_t(f)\|_{L_p} \lesssim t^{-s} \|f\|_{H^s_p}.
\end{equation*}
\end{theorem}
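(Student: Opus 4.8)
The plan is to deduce Theorem \ref{th:0} from Theorem \ref{th:00} together with the fact that $\sigma_t$ is a bounded operator on $L_p(\mathcal{G}_{k,d})$ uniformly in $t$, and that it reproduces diffusion polynomials of degree at most $t/\sqrt{2}$. First I would note that if $g\in\Pi_{t/\sqrt 2}$, i.e.\ $\hat g(\ell)=0$ whenever $\lambda_\ell>t^2/2$, then $h(t^{-2}\lambda_\ell)=1$ on the support of $\hat g$, so $\sigma_t(g)=g$. Hence for arbitrary $g\in\Pi_{t/\sqrt 2}$ we may write
\begin{equation*}
f-\sigma_t(f) = (f-g) - \sigma_t(f-g),
\end{equation*}
and therefore $\|f-\sigma_t(f)\|_{L_p}\le (1+\|\sigma_t\|_{L_p\to L_p})\,\|f-g\|_{L_p}$. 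Taking the infimum over $g\in\Pi_{t/\sqrt 2}$ and invoking Theorem \ref{th:00} (with $t$ replaced by $t/\sqrt 2$, which only changes the implied constant) gives $\|f-\sigma_t(f)\|_{L_p}\lesssim t^{-s}\|f\|_{H^s_p}$, as claimed.

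The remaining ingredient, and the main obstacle, is the uniform bound $\sup_{t\ge 1}\|\sigma_t\|_{L_p\to L_p}<\infty$ for $1\le p\le\infty$. By \eqref{eq:approx sigma}, $\sigma_t(f)$ is convolution against the kernel $K_t(P,Q)$ of \eqref{eq:sigma again and new}, so by Schur's test it suffices to show
\begin{equation*}
\sup_{Q\in\mathcal{G}_{k,d}}\int_{\mathcal{G}_{k,d}}|K_t(P,Q)|\,\mathrm d\mu_{k,d}(P)\lesssim 1
\end{equation*}
uniformly in $t$ (the symmetry of $K_t$ and the homogeneity of $\mathcal{G}_{k,d}$ handle the other marginal and reduce this to a single point $Q$). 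This is exactly a localization estimate for the smoothly truncated spectral projector: one needs $|K_t(P,Q)|\lesssim t^{k(d-k)}(1+t\,\rho(P,Q))^{-N}$ for every $N$, where $\rho$ is the geodesic distance, and then integrates using $\mu_{k,d}(\{P:\rho(P,Q)\le r\})\lesssim r^{k(d-k)}$. Such pointwise kernel bounds are precisely what the general theory of \cite{Maggioni:2008fk,Mhaskar:2010kx} provides: the compact Riemannian manifold $\mathcal{G}_{k,d}$ with the Laplace–Beltrami operator satisfies the finite-speed-of-propagation / Gaussian heat-kernel hypotheses of that framework, so the localization of $K_t$ and hence the $L_p\to L_p$ bound of $\sigma_t$ follow directly.

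Thus the proof is essentially a two-line reduction (polynomial reproduction plus operator boundedness applied to $f-g$), and the work is entirely in citing the kernel localization. I would therefore present it as: state $\sigma_t|_{\Pi_{t/\sqrt 2}}=\mathrm{id}$, invoke \cite[Proposition 5.3]{Mhaskar:2010kx} (or the kernel estimates therein) for $\|\sigma_t\|_{L_p\to L_p}\lesssim 1$, and then combine with Theorem \ref{th:00}. One should be mildly careful that the constant in Theorem \ref{th:00} is uniform over the rescaling $t\mapsto t/\sqrt 2$ — it is, since $\dist(f,\Pi_{t/\sqrt 2})_{L_p}\le \dist(f,\Pi_{ct})_{L_p}$ and the hypothesis $s>k(d-k)/p$ only enters to guarantee $H^s_p\hookrightarrow L_p$ and the validity of Theorem \ref{th:00}, which is already granted.
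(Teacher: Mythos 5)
Your proposal is correct, and it is essentially the argument behind the result as the paper uses it: the paper gives no proof of Theorem~\ref{th:0} itself but cites \cite[Proposition 5.3]{Mhaskar:2010kx}, and your reduction --- $\sigma_t$ reproduces $\Pi_{t/\sqrt 2}$ since $h\equiv 1$ on $[0,1/2]$, write $f-\sigma_t(f)=(f-g)-\sigma_t(f-g)$, take the infimum over $g$, and invoke the uniform $L_p$-boundedness of $\sigma_t$ coming from the localization estimate $|K_t(P,Q)|\lesssim t^{k(d-k)}(1+t\rho(P,Q))^{-N}$ of the Maggioni--Mhaskar framework --- is exactly the standard near-best-approximation proof given there. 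The only caveat is that the kernel localization is a genuinely nontrivial input which you correctly cite rather than prove, which matches the paper's own level of detail.
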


If $f$ needs to be approximated from a finite sample, then $\sigma_t(f)$ in \eqref{eq:approx sigma} cannot be determined  directly and is replaced with a weighted finite sum in \cite{Mhaskar:2010kx}. Indeed, for sample points $\{P_j\}_{j=1}^{n}\subset\mathcal{G}_{k,d}$ and weights $\{\omega_j\}_{j=1}^{n}$, we define
\begin{equation}\label{eq:def sigma tilde etc}
\sigma_t(f,\{(P_j,\omega_j)\}_{j=1}^n):=\sum_{j=1}^n \omega_j f(P_j)K_t(P_j,\cdot).
\end{equation}
Note that we must now consider functions $f$ in Bessel potential spaces, for which point evaluation makes sense. We shall observe in the following that if samples and weights satisfy some cubature type property, then the approximation rate is still preserved when using $\sigma_t(f,\{(P_j,\omega_j)\}_{j=1}^n)$ in place of $\sigma_{t}(f)$. However, we need an additional technical assumption on the points $\{P_j\}_{j=1}^{n}$, for which we denote the geodesic distance between
$P,Q\in\mathcal{G}_{k,d}$ by
\begin{equation*}
\rho(P,Q)=\sqrt{\theta_1^2+\ldots+\theta_k^2},
\end{equation*}
where $\theta_1,\ldots\theta_k$
are the principal angles between the associated subspaces of $P$ and $Q$
respectively, i.e., 
\begin{equation*}
\theta_i=\arccos(\sqrt{y_i}), \quad i=1,\ldots,k,
\end{equation*}
and $y_1,\ldots,y_k$ are the $k$ largest eigenvalues of the matrix $PQ$. We define the ball of radius $r$ centered around $P\in \mathcal{G}_{k,d}$ by 
\begin{equation*}
\mathbb{B}_r(P):=\{Q \in \mathcal G_{k,d} : \rho(P,Q) \le r \}. 
\end{equation*}

The following approximation from finitely many sample points is due to \cite[Proposition 5.3]{Mhaskar:2010kx}. \begin{theorem}\label{th:m 0}
For $t=1,2,\ldots$, suppose we are given a sequence of point sets $\{P^t_j\}_{j=1}^{n_t}\subset\mathcal{G}_{k,d}$ and positive weights $\{\omega_j^t\}_{j=1}^{n_t}$ such that 
\begin{equation}\label{eq:product cub}
\int_{\mathcal{G}_{k,d}} g_1(P)g_2(P)\mathrm d\mu_{k,d}(P) =\sum_{j=1}^{n_t} \omega_j^t g_1(P^t_j)g_2(P^t_j) ,\qquad g_1,g_2\in\Pi_t.
\end{equation}
Then the approximation error, for $f\in H^s_\infty(\mathcal{G}_{k,d})$, is bounded by
\begin{equation}\label{eq:error approx finale}
\|f-\sigma_t(f,\{(P^t_j,\omega^t_j)\}_{j=1}^{n_t})\|_{L_\infty} \lesssim t^{-s} (\|f\|_{L_\infty}+ \|f\|_{H^s_\infty}).
\end{equation}
\end{theorem}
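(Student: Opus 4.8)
The plan is to deduce \eqref{eq:error approx finale} from the deterministic approximation bound of Theorem \ref{th:00}, combined with a reproduction identity for $\sigma_t$ and a uniform $L_\infty$–operator–norm bound for the discretized operator. Write $\nu:=\sum_{j=1}^{n_t}\omega_j^t\,\delta_{P_j^t}$ for the quadrature measure, so that $\sigma_t(f,\{(P_j^t,\omega_j^t)\}_{j=1}^{n_t})=\int_{\mathcal G_{k,d}}f(P)K_t(P,\cdot)\,\mathrm d\nu(P)$ (point evaluation being legitimate since $H^s_\infty\hookrightarrow C(\mathcal G_{k,d})$ for $s>0$). Since $h(t^{-2}\lambda_\ell)=0$ once $\lambda_\ell\ge t^2$, each section $K_t(\cdot,Q)$ lies in $\Pi_t$; applying the hypothesis \eqref{eq:product cub} with $g_1=g\in\Pi_t$ and $g_2=K_t(\cdot,Q)$ gives $\sigma_t(g,\{(P_j^t,\omega_j^t)\}_{j=1}^{n_t})(Q)=\int_{\mathcal G_{k,d}}g(P)K_t(P,Q)\,\mathrm d\mu_{k,d}(P)=\sigma_t(g)(Q)$. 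If in addition $g\in\Pi_{t/2}$, then $h$ equals $1$ on the spectrum of $g$, so $\sigma_t(g)=g$; hence the discretized operator reproduces $\Pi_{t/2}$.

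The analytic input is the sharp localization of the filtered kernel: since $h\in C^\infty(\R_{\ge0})$ is compactly supported, for every $\kappa>0$ one has, uniformly in $P,Q\in\mathcal G_{k,d}$ and $\tau\ge1$,
\begin{equation*}
|K_\tau(P,Q)|\;\lesssim_\kappa\;\frac{\tau^{k(d-k)}}{\bigl(1+\tau\,\rho(P,Q)\bigr)^{\kappa}},
\end{equation*}
as established in \cite{Maggioni:2008fk,Mhaskar:2010kx} from the smoothness of $h$ and the spectral theory of $\Delta$ on the compact homogeneous space $\mathcal G_{k,d}$. I would combine it with $K_\tau(Q,Q)\asymp\dim\Pi_\tau\asymp\tau^{k(d-k)}$ (Weyl's law) and the Bernstein inequality $\|\nabla K_\tau(Q,\cdot)\|_{L_\infty}\lesssim\tau^{k(d-k)+1}$ for diffusion polynomials, which together yield $K_\tau(Q,\cdot)^2\gtrsim\tau^{2k(d-k)}$ on a ball $\mathbb B_{c/\tau}(Q)$.

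The technical heart is to extract, \emph{from the product-exactness hypothesis alone}, the metric regularity of $\nu$ and the resulting uniform Lebesgue constant. Applying \eqref{eq:product cub} with $g_1=g_2=K_\tau(Q,\cdot)$ for any $\tau\le t$ yields $\int_{\mathcal G_{k,d}}|K_\tau(Q,P)|^2\,\mathrm d\nu(P)=\int_{\mathcal G_{k,d}}|K_\tau(Q,P)|^2\,\mathrm d\mu_{k,d}(P)=\sum_\ell h(\tau^{-2}\lambda_\ell)^2\varphi_\ell(Q)^2\lesssim\tau^{k(d-k)}$; combined with the lower bound on $K_\tau(Q,\cdot)^2$ near $Q$ this forces $\nu(\mathbb B_r(Q))\lesssim\max(r,1/t)^{k(d-k)}$ for all $Q$ and all $r>0$ (choose $\tau\asymp 1/r$ when $r\ge1/t$, and use $\nu(\mathcal G_{k,d})=1$ for large $r$). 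Splitting the kernel bound over the dyadic annuli $\{2^i/t\le\rho(\cdot,Q)<2^{i+1}/t\}$ and taking $\kappa>k(d-k)$ then gives
\begin{equation*}
\sup_{Q\in\mathcal G_{k,d}}\int_{\mathcal G_{k,d}}|K_t(P,Q)|\,\mathrm d\nu(P)\;\lesssim\;1,
\end{equation*}
that is, the discretized operator is bounded on $L_\infty(\mathcal G_{k,d})$ with a constant independent of $t$.

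To finish, pick $g\in\Pi_{t/2}$ with $\|f-g\|_{L_\infty}\le2\,\dist(f,\Pi_{t/2})_{L_\infty}$. By the reproduction property $\sigma_t(g,\{(P_j^t,\omega_j^t)\}_{j=1}^{n_t})=g$, hence
\begin{equation*}
f-\sigma_t\bigl(f,\{(P_j^t,\omega_j^t)\}_{j=1}^{n_t}\bigr)=(f-g)-\sigma_t\bigl(f-g,\{(P_j^t,\omega_j^t)\}_{j=1}^{n_t}\bigr),
\end{equation*}
and the $L_\infty$–bound on the discretized operator together with Theorem \ref{th:00} gives $\|f-\sigma_t(f,\{(P_j^t,\omega_j^t)\}_{j=1}^{n_t})\|_{L_\infty}\lesssim\|f-g\|_{L_\infty}\lesssim\dist(f,\Pi_{t/2})_{L_\infty}\lesssim t^{-s}\|f\|_{H^s_\infty}$, using $(t/2)^{-s}\asymp t^{-s}$. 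Since $\|f\|_{L_\infty}\lesssim\|f\|_{H^s_\infty}$ for $s>0$, this is even slightly stronger than the asserted \eqref{eq:error approx finale}. I expect the only delicate point to be the third step — converting exactness of the quadrature on products $g_1g_2$ into metric regularity of $\nu$ and then into the uniform Lebesgue constant — which is precisely where the compact support and smoothness of the filter $h$ enter through the kernel localization estimate; the remaining steps are the triangle inequality and Weyl/Bernstein bookkeeping.
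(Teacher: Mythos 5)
Your argument is sound, and it should be noted that the paper itself does not prove Theorem \ref{th:m 0}: it quotes it from \cite[Proposition 5.3]{Mhaskar:2010kx}, adding only the remark that the sample-regularity condition required there is automatic for positive weights by \cite[Theorem 5.5 (a)]{Filbir:2011fk}. Your proposal is essentially a reconstruction of that cited chain of arguments. The step you rightly identify as the technical heart --- extracting $\nu(\mathbb{B}_r(Q))\lesssim \max(r,1/t)^{k(d-k)}$ from the product exactness \eqref{eq:product cub} together with the positivity of the $\omega_j^t$, and converting this into a uniform bound on $\sup_Q\int|K_t(P,Q)|\,\mathrm d\nu(P)$ --- is exactly the content the paper outsources to Filbir--Mhaskar; your derivation (testing \eqref{eq:product cub} with $g_1=g_2=K_\tau(Q,\cdot)$ for $\tau\le t$, and combining the on-diagonal lower bound with the gradient Bernstein inequality) is the standard one and is correct, with positivity entering precisely where you restrict the integral to a ball. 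The two analytic facts you import as black boxes, the localization estimate $|K_\tau(P,Q)|\lesssim_\kappa \tau^{k(d-k)}(1+\tau\rho(P,Q))^{-\kappa}$ and the Bernstein inequality for diffusion polynomials, are indeed established in \cite{Maggioni:2008fk,Mhaskar:2010kx,Filbir:2011fk}, so this granularity is legitimate (the paper cites at an even coarser one). Your final assembly differs mildly from the route the paper sketches: the paper notes that the proof in \cite{Mhaskar:2010kx} combines Theorem \ref{th:0} with an estimate of the quadrature error between $\sigma_t(f)$ and $\sigma_t(f,\{(P^t_j,\omega^t_j)\}_{j=1}^{n_t})$, whereas you use reproduction of $\Pi_{t/2}$ (note $\Pi_{t/\sqrt 2}$ already suffices, since $h\equiv 1$ on $[0,1/2]$) together with the uniform Lebesgue constant and the best-approximation bound of Theorem \ref{th:00}. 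Both variants rest on the same localization and measure-regularity ingredients, and yours even produces the marginally cleaner bound $t^{-s}\|f\|_{H^s_\infty}$, without the additional $\|f\|_{L_\infty}$ term appearing in \eqref{eq:error approx finale}.
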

Note that the original result stated in \cite{Mhaskar:2010kx} requires an additional regularity condition on the samples. This condition is satisfied since we restrict us to positive weights, cf.~\cite[Theorem 5.5 (a)]{Filbir:2011fk}. The assumption \eqref{eq:product cub} is a cubature type condition, for which our results in Section \ref{sec:diff poly} shall provide further clarification. It indeed turns out that there are sequences $\{(P^t_j,\omega^t_j)\}_{j=1}^{n_t}$ satisfying \eqref{eq:product cub} with $n_t\asymp t^{k(d-k)}$, in which case \eqref{eq:error approx finale} becomes
\begin{equation}\label{eq:rate in n finale}
\|f-\sigma_t(f,\{(P^t_j,\omega^t_j)\}_{j=1}^{n_t})\|_{L_\infty} \lesssim n_t^{-\frac{s}{k(d-k)}} (\|f\|_{L_\infty}+ \|f\|_{H^s(L_\infty)}).
\end{equation}
Note that the approximation rate in \eqref{eq:rate in n finale} matches the one in \eqref{eq:general cub fomula} for the integration error. The proof of Theorem \ref{th:m 0} in \cite{Mhaskar:2010kx} is indeed based on Theorem \ref{th:0} and on the approximation of the integral $\sigma_t(f)$ in \eqref{eq:approx sigma} by the weighted finite sum $\sigma_t(f,\{(P^t_j,\omega_j^t)\}_{j=1}^n)$ in \eqref{eq:def sigma tilde etc}. For related results on local smoothness and approximation, we refer to \cite{Ehler:2012fk}. 

\section{Numerically feasible formulations}\label{sec:4}
This section is dedicated to turn the approximation schemes presented in the
previous sections into numerically feasible expressions. In other words, we
determine explicit expressions for the kernel $K_t$ in \eqref{eq:sigma again and
  new} and provide an optimization method for the numerical
  computation of cubature points or QMC systems on the Grassmannian.

\subsection{Diffusion kernels on Grassmannians}\label{sec:general G}

The probability measure $\mu_{k,d}$ is invariant under orthogonal conjugation and induced by the Haar
(probability) measure $\mu_{\mathcal O(d)}$ on the orthogonal group
$\mathcal{O}(d)$, i.e., for any $Q \in \G_{k,d}$ and measurable function
$f$, we have
\[
\int_{\G_{k,d}} f(P)\mathrm d\mu_{k,d}(P) = \int_{\mathcal O(d)} f(O Q O^{\top}) \mathrm d
\mu_{\mathcal O(d)}(O).
\]
By the orthogonal invariance of the Laplace-Beltrami operator $\Delta$ on
$\G_{k,d}$ it is convenient for the description of the eigenfunctions to recall
the irreducible decomposition of $L_{2}(\G_{k,d})$ with respect to the
orthogonal group. Given a nonnegative integer $t$, a partition of $t$ is an
integer vector $\pi=(\pi_1,\dots,\pi_t)$ with $\pi_1\geq \ldots\geq \pi_t\geq 0$
and $|\pi| = t$, where $|\pi|:=\sum_{i=1}^t\pi_i$ is the size of $\pi$. The
length $l(\pi)$ is the number of nonzero parts of $\pi$.
The space $L_{2}(\G_{k,d})$ decomposes into 
\begin{equation}\label{eq:decomp L2 single}
L_{2}(\G_{k,d}) = \bigoplus_{l(\pi) \le k} H_{\pi}(\G_{k,d}),\qquad
H_{\pi}(\G_{k,d}) \perp H_{\pi'}(\G_{k,d}), \quad \pi \ne \pi',
\end{equation}
where $H_{\pi}(\G_{k,d})$ is equivalent to $\mathcal H_{2\pi}^{d}$, the
irreducible representation of $\mathcal O(d)$ associated to the partition
$2\pi=(2\pi_{1},\dots,2\pi_{t})$,
cf.~\cite{Bachoc:2002aa,James:1974aa}. 

By orthogonal invariance the spaces $H_{\pi}(\G_{k,d})$ are eigenspaces of the
Laplace-Beltrami operator $\Delta$ on $\mathcal{G}_{k,d}$ where, according to
\cite[Theorem 13.2]{James:1974aa}, the associated eigenvalues are
\begin{equation}\label{eq:eig values explicit} 
\lambda(\pi) = 2|\pi|d+4\sum_{i=1}^k \pi_i(\pi_i-i).
\end{equation}
Note, for a given eigenvalue $\lambda_{\ell}$ the corresponding eigenspace can
decompose into more than one irreducible subspace $H_{\pi}(\G_{k,d})$.

Note that the following results are translations from representation theory used in \cite{James:1974aa}, see also \cite{Bachoc:2002aa,Bachoc:2006aa}, into the terminology of reproducing kernels, where we have only adapted the scaling of the kernels.  
The space $H_{\pi}(\G_{k,d})$ equipped with the $L_{2}$ inner product is a finite
dimensional reproducing kernel Hilbert space, and its reproducing kernel $K_\pi$ is
given by
\begin{equation}\label{eq:rep of H etc}
K_\pi(P,Q)=\sum_{\varphi_\ell\in H_{\pi}(\G_{k,d})} \varphi_\ell(P)\varphi_\ell(Q).
\end{equation}
Moreover, $K_\pi$ is zonal, i.e., the value $K_\pi(P,Q)$
only depends on the $k$ largest eigenvalues
\begin{equation*}
y_1(P,Q),\ldots,y_k(P,Q),
\end{equation*}
of the matrix $PQ$ counted with multiplicities, see \cite{James:1974aa}. It
follows that the kernel $K_t$ in \eqref{eq:sigma again and new} is also zonal
since it can be written as
\begin{equation}\label{eq:kernel by K_pi}
  K_t(P,Q)= \sum_{l(\pi)\leq k} h(t^{-2}\lambda(\pi)) K_\pi(P,Q).
\end{equation}
%
%
%
%
According to \cite{James:1974aa}, the kernels $K_\pi$ are in one-to-one
correspondence with generalized Jacobi polynomials. For parameters
$\alpha,\beta \in \R$ satisfying
$\tfrac 1 2(m-1) < \alpha < \beta - \tfrac 1 2(m-1)$, the generalized Jacobi
polynomials, $J^{\alpha,\beta}_{\pi}:[0,1]^{m} \to \R$ with $l(\pi) \le m$, are
symmetric polynomials of degree $|\pi|$ and form a complete orthogonal system
with respect to the density
\begin{equation}
  \label{eq:w_ac}
  w_{\alpha,\beta}(y_{1},\dots,y_{m}) := \prod_{i=1}^{m}\big(y_{i}^{\alpha - \frac 1
    2(m+1)}(1-y_{i})^{\beta-\alpha-\frac 1 2(m+1)}\big) \prod_{j=i+1}^{m}|y_{i}-y_{j}|,
\end{equation}
where $0 < y_1,\ldots y_{m} < 1$, cf.~\cite{Davis:1999zg,Dumitriu:2007ax}. For
the special parameters $\alpha=\tfrac k 2$ with $k\leq \tfrac d 2$ and
$\beta=\tfrac d 2$, and the normalization
$J_{\pi}^{\frac k2,\frac d2}(1,\dots,1) = \dim(H_{\pi}(\G_{k,d}))$ the
generalized Jacobi polynomials in $m=k$ variables can be identified with the
reproducing kernels $K_\pi$ of $H_{\pi}(\G_{k,d}) $, i.e.,
 \begin{equation}
    \label{eq:pkl_jacobi}
K_\pi(P,Q) = J_{\pi}^{\frac k2, \frac d 2}( y_{1}(PQ), \dots, y_{k}(PQ) ), \qquad P,Q \in \G_{k,d}.
  \end{equation}
Now, \eqref{eq:kernel by K_pi} and \eqref{eq:pkl_jacobi} yield that the expression for the kernel $K_t$ in \eqref{eq:sigma again and new} can be computed explicitly by 
\begin{align*}
  K_t(P,Q)&=  \sum_{l(\pi)\leq k} h(t^{-2}\lambda(\pi))  J_{\pi}^{\frac k2, \frac d 2}( y_{1}(PQ), \dots, y_{k}(PQ) ).
\end{align*}
Thus, avoiding the actual computation of $\{\varphi_\ell\}_{\ell=0}^\infty$, we have derived the expression of $K_t$ by means of generalized Jacobi polynomials. 


\subsection{Diffusion polynomials on Grassmannians}\label{sec:diff poly}
This section is dedicated to investigate on the relations between diffusion polynomials $\Pi_t$ and multivariate polynomials of degree $t$ restricted to the Grassmannian. Indeed, the space of polynomials on $\G_{k,d}$ of degree at most $t$ is defined as restrictions of polynomials by 
\begin{equation}\label{eq:Pol def}
  \Pol_{t}(\G_{k,d}):=   \{ f|_{\G_{k,d}}: f\in\C[X]_t\},
\end{equation}
where $\C[X]_t$ is the collection of multivariate polynomials of degree at most
$t$ with $d^{2}$ many variables arranged as a matrix $X\in\C^{d\times d}$. Here, $f|_{\G_{k,d}}$ denotes the restriction of $f$ to $\G_{k,d}$. 
It turns out that $\Pol_{t}(\G_{k,d})$ is a direct sum of eigenspaces of the Laplace-Beltrami operator, i.e., 
\begin{equation}\label{eq:pol is sum}
 \Pol_{t}(\G_{k,d})= \bigoplus_{\substack{|\pi|\leq t \\ l(\pi)\leq k}} H_{\pi}(\G_{k,d}),
\end{equation}
cf.~\cite[Corollary in Section 11]{James:1974aa} and also \cite[Section 2]{Bachoc:2004fk}, 
which enables us to relate diffusion polynomials to regular polynomials restricted to the Grassmannian. For $k=1$, the eigenvalues \eqref{eq:eig values explicit} directly lead to 
\begin{equation*}
\Pi_{\sqrt{4t^2+2t(d-2)}}=\Pol_t(\mathcal{G}_{1,d}).
\end{equation*}
For general $k$, the situation is more complicated and needs some preparation.
\begin{lemma} \label{lemma:11}
  Let $d, k, t \in \mathbb N$ with $k \le \tfrac d2$ be fixed. Then for any
  partition $\pi$ with $l(\pi) \le k$ and $|\pi| \ge t$ it holds
  \begin{equation}
    \label{eq:lowlambda}
    \lambda(\pi) \ge 
    \lceil\tfrac 4k t^{2} + 2t(d - k -1)\rceil.
  \end{equation}
\end{lemma}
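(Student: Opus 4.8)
The plan is to expand the eigenvalue formula \eqref{eq:eig values explicit}, to bound each of the two resulting sums in terms of $s:=|\pi|$ alone, and then to use $s\ge t$ together with the integrality of $\lambda(\pi)$. Writing
\begin{equation*}
\lambda(\pi) = 2sd + 4\sum_{i=1}^k \pi_i^2 - 4\sum_{i=1}^k i\pi_i ,
\end{equation*}
I would first note that, since $l(\pi)\le k$, at most $k$ of the parts are nonzero, so convexity of $x\mapsto x^2$ (equivalently Cauchy--Schwarz) gives $\sum_{i=1}^k\pi_i^2 \ge \tfrac1k\big(\sum_{i=1}^k\pi_i\big)^2 = s^2/k$.

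Next I would bound the linear term from above using that $\pi$ is non-increasing. Setting $\delta_i:=\pi_i-\pi_{i+1}\ge 0$ for $i<k$ and $\delta_k:=\pi_k\ge 0$, Abel summation gives $\sum_{i=1}^k i\pi_i = \sum_{j=1}^k \tfrac{j(j+1)}{2}\,\delta_j$ while $s=\sum_{j=1}^k j\,\delta_j$; since the ratio $\tfrac{j(j+1)/2}{j}=\tfrac{j+1}{2}$ is maximized at $j=k$, this forces $\sum_{i=1}^k i\pi_i \le \tfrac{k+1}{2}\,s$, with equality exactly when all parts are equal. Combining the two estimates,
\begin{equation*}
\lambda(\pi) \ \ge\ 2sd + \tfrac4k s^2 - 2(k+1)s \ =\ \tfrac4k s^2 + 2s(d-k-1) \ =:\ g(s).
\end{equation*}

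To finish, observe that $k\le d/2$ and $k\ge 1$ give $d-k-1\ge k-1\ge 0$, so $g$ is non-decreasing on $[0,\infty)$; hence $g(s)\ge g(t)$ since $s\ge t$, i.e. $\lambda(\pi)\ge \tfrac4k t^2 + 2t(d-k-1)$. Because $\lambda(\pi)$ is a nonnegative integer, it then also dominates the ceiling of the right-hand side, which is exactly \eqref{eq:lowlambda}.

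I expect no essential obstacle here: the only step that is not a one-line estimate is the upper bound $\sum_{i=1}^k i\pi_i\le\tfrac{k+1}{2}s$, which uses the partition (monotonicity) structure, and the one place where care is genuinely needed is invoking the hypothesis $k\le d/2$ to guarantee that $g$ is monotone, so that passing from $|\pi|$ to $t$ is legitimate. The integrality remark at the very end is what upgrades the plain real inequality to the stated ceiling.
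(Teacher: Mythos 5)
Your proof is correct, and it reaches the bound by a genuinely different route than the paper. The paper treats the problem as a constrained convex program: it minimizes the relaxation $f(x)=2d\sum_i x_i+4\sum_i x_i(x_i-i)$ over $\{\,x_1\ge\dots\ge x_k\ge0,\ \sum_i x_i\ge t\,\}$ and verifies the Karush--Kuhn--Tucker conditions at $x^{*}=(t/k,\dots,t/k)$ with explicit multipliers $\mu_0=8\tfrac tk+2d-2(k+1)$, $\mu_i=2i(k-i)$, which identifies the minimizer and yields $f(x^{*})=\tfrac4k t^2+2t(d-k-1)$. You instead bound the two sums directly: Cauchy--Schwarz gives $\sum_i\pi_i^2\ge s^2/k$, and Abel summation with the monotonicity of the parts gives $\sum_i i\pi_i\le\tfrac{k+1}{2}s$, producing the same quadratic lower bound $g(s)$ in $s=|\pi|$; you then pass from $s\ge t$ to $t$ by monotonicity of $g$ (using $k\le d/2$, so $d-k-1\ge k-1\ge0$) and invoke integrality of $\lambda(\pi)$ for the ceiling. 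The hypothesis $k\le d/2$ plays the analogous role in the paper in guaranteeing $\mu_0\ge0$. Your argument is more elementary and self-contained (no optimization machinery), and it makes explicit two points the paper leaves implicit: the reduction from $|\pi|\ge t$ to $|\pi|=t$ via monotonicity of $g$, and the justification of the ceiling via integrality of $\lambda(\pi)$; the paper's KKT computation, in return, exhibits the exact minimizer and shows directly where equality is attained (parts equal to $t/k$), which is the observation the authors use when remarking that the bound is tight only for $t$ a multiple of $k$.
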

Note that the right-hand side of \eqref{eq:lowlambda}, up to the square root and the ceiling function, is \eqref{eq:eig values explicit} with $\pi_i=t/k$, for $i=1,\ldots,k$. 
\begin{proof}
In view of \eqref{eq:eig values explicit}, let us define
\begin{equation*} f(x_{1},\dots,x_{k})  := 2 d \sum_{i=1}^{k} x_{i} +
      4 \sum_{i=1}^{k} x_{i}(x_{i}-i).
      \end{equation*}
 For partitions $\pi$ with $|\pi| \ge t$, we obtain the lower bound
  \eqref{eq:lowlambda} by solving the following convex optimization problem
  \[
    \begin{aligned}
      \min_{x \in \mathbb R^{k}} f(x_{1},\dots,x_{k}) \quad     \text{ such that } \quad
      g_{i}(x)  \le 0,\;\; i=0,\dots,k,
    \end{aligned}  
  \]
  where $ g_{0}(x)  = t - \sum_{i=1}^{k} x_{i}$ and 
\begin{equation*}  
      g_{i}(x)  = x_{i+1}-x_{i}, \quad i=1,\dots,k-1,\qquad 
      g_{k}(x)  = -x_{k}.
\end{equation*}  Indeed, we shall verify that the
  minimum is attained at $x^{*}:=(\tfrac tk,\dots,\tfrac tk)$ with 
  \[
    f(x^{*}) = \tfrac 4k t^{2} + 2(d - k -1)t
  \] 
by checking the Karush-Kuhn-Tucker (KKT) conditions
  \[
    \begin{aligned}
      & \nabla f(x^{*}) + \sum_{i=0}^{k} \mu_{i} \nabla g_{i}(x^{*}) = 0,\\
      & g_{i}(x^{*}) \le 0, \quad \mu_{i} \ge 0, \quad \mu_{i}g_{i}(x^{*}) = 0, \qquad i = 0,\dots,k,
    \end{aligned}
  \]
  with $ \mu_{0} = 8\tfrac tk + 2d - 2(k+1)$ and $\mu_{i} = 2 i(k-i)$, for $i=1,\dots,k$. More precisely, denoting the canonical basis in $\mathbb R^{k}$ by $\{e_{i}\}_{i=1}^k$, we obtain
    {  \allowdisplaybreaks
\begin{align*}
      -\sum_{i=0}^{k} \mu_{i} \nabla g_{i}(x^{*})
      & = \mu_{0} \sum_{i=1}^{k} e_{i} - \sum_{i=1}^{k-1} \mu_{i} (e_{i+1} - e_{i}) \\
      & = (\mu_{0} + \mu_{1})e_{1} + (\mu_{0} - \mu_{k-1}) e_{k} + \sum_{i=2}^{k-1} (\mu_{0} + \mu_{i} - \mu_{i-1}) e_{i} \\
      & = (\mu_{0} + 2(k-1)) e_{1} + (\mu_{0} - 2(k-1)) e_{k} + \sum_{i=2}^{k-1} (\mu_{0} - 4(i-1) + 2(k-1)) e_{i}\\
      & = \sum_{i=1}^{k} (\mu_{0} - 4 i + 2(k+1)) e_{i}\\
      & = \sum_{i=1}^{k} (8 \tfrac tk + 2d - 4 i) e_{i}= \nabla f(x^{*})
\end{align*}
}
and conclude that the KKT-conditions are satisfied. Hence, 
  \eqref{eq:lowlambda} holds.
\end{proof}

\begin{theorem}\label{pro:1}
Polynomials and diffusion polynomials on the Grassmannian $\G_{k,d}$ satisfy the relation
\begin{equation*}
\Pi_{s(t+1)-\epsilon} \subset  \Pol_{t}(\G_{k,d}) \subset \Pi_{\sqrt{4t^2+2t(d-2)}}, \quad \text{for all $0<\epsilon<2s(t+1)$},
\end{equation*}
where $s(t)=\sqrt{\lceil\tfrac 4k t^{2} + 2t(d - k -1)\rceil}$.
\end{theorem}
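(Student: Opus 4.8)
The plan is to reduce both inclusions to elementary comparisons of the eigenvalues $\lambda(\pi)$ in \eqref{eq:eig values explicit}. First I would record that, since every $\varphi_\ell$ lies in some $H_\pi(\G_{k,d})$ with $l(\pi)\le k$ and each $H_\pi(\G_{k,d})$ sits inside the $\Delta$-eigenspace for the eigenvalue $-\lambda(\pi)$, the decomposition \eqref{eq:decomp L2 single} yields
\[
  \Pi_u = \bigoplus_{\substack{l(\pi)\le k \\ \lambda(\pi)\le u^2}} H_\pi(\G_{k,d}),\qquad u\ge 0.
\]
Combined with \eqref{eq:pol is sum}, this turns each claimed inclusion into an inclusion of the corresponding index sets of partitions, so it suffices to compare $\lambda(\pi)$ against the two stated thresholds.

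For the right-hand inclusion I would fix a partition $\pi$ with $l(\pi)\le k$ and $|\pi|\le t$ and write $\lambda(\pi) = 2|\pi|d + 4\sum_{i=1}^k \pi_i^2 - 4\sum_{i=1}^k i\,\pi_i$. Bounding $\sum_i \pi_i^2 \le (\sum_i \pi_i)^2 = |\pi|^2 \le t^2$ (all parts being nonnegative) and $\sum_i i\,\pi_i \ge \sum_i \pi_i = |\pi|$ gives $\lambda(\pi) \le 4t^2 + 2|\pi|(d-2) \le 4t^2 + 2t(d-2)$, where the last step uses $|\pi|\le t$ and $d\ge 2k\ge 2$. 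Hence every $H_\pi(\G_{k,d})$ present in $\Pol_t(\G_{k,d})$ is also present in $\Pi_{\sqrt{4t^2+2t(d-2)}}$.

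For the left-hand inclusion I would argue by contraposition: if $\pi$ has $l(\pi)\le k$ and $|\pi|\ge t+1$, then Lemma \ref{lemma:11} applied with $t+1$ in place of $t$ gives $\lambda(\pi) \ge \lceil \tfrac 4k (t+1)^2 + 2(t+1)(d-k-1)\rceil = s(t+1)^2$. Since $\lambda(\pi)$ is an integer and $0<\epsilon<2s(t+1)$ forces $(s(t+1)-\epsilon)^2 < s(t+1)^2$, any such $\pi$ satisfies $\lambda(\pi) > (s(t+1)-\epsilon)^2$ and so does not contribute to $\Pi_{s(t+1)-\epsilon}$. Therefore every $H_\pi(\G_{k,d})$ occurring in $\Pi_{s(t+1)-\epsilon}$ has $|\pi|\le t$ and $l(\pi)\le k$, hence occurs in $\Pol_t(\G_{k,d})$ by \eqref{eq:pol is sum}, which is the desired inclusion.

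Once Lemma \ref{lemma:11} is in hand there is no real difficulty here; the two things that need care are the identification of $\Pi_u$ with the $H_\pi$-decomposition above, so that both inclusions become statements about partitions, and the bookkeeping with the strict inequality — the parameter $\epsilon$ is exactly what converts the non-strict bound $\lambda(\pi)\ge s(t+1)^2$ of Lemma \ref{lemma:11} into the strict threshold built into $\Pi_{s(t+1)-\epsilon}$. The estimates for the upper threshold are completely elementary.
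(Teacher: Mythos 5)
Your proposal is correct and follows essentially the same route as the paper: the second inclusion via the elementary bound $\lambda(\pi)\le 2|\pi|d+4\sum_i\pi_i^2-4\sum_i\pi_i\le 4t^2+2t(d-2)$, and the first inclusion via Lemma \ref{lemma:11} (you state its contrapositive) together with the observation that $0<\epsilon<2s(t+1)$ gives $(s(t+1)-\epsilon)^2<s^2(t+1)$. Your explicit identification of $\Pi_u$ with the sum of the $H_\pi(\G_{k,d})$ having $\lambda(\pi)\le u^2$ merely makes precise what the paper leaves implicit, and the remark about integrality of $\lambda(\pi)$ is unnecessary but harmless.
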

\begin{proof}
Due to \eqref{eq:decomp L2 single}, we are only dealing with partitions $\pi$ satisfying $l(\pi)\leq k$. For $|\pi|\leq t$, we derive 
\begin{align*}
\lambda(\pi)=2|\pi|d+4\sum_{i=1}^k \pi_i(\pi_i-i) & \leq 2|\pi|d+4\sum_{i=1}^k \pi^2_i-4\sum_{i=1}^k \pi_i \\
&\leq 4t^2+2t(d-2),
\end{align*}
which yields the second set inclusion. 

Lemma \ref{lemma:11} yields that $\lambda(\pi)<s^2(t+1)$ implies $|\pi|< t+1$, the latter being equivalent to $|\pi|\leq t$ since both $|\pi|$ and $t$ are integers. The range of $\epsilon$ yields $(s(t+1)-\epsilon)^2<s^2(t+1)$, so that we deduce the first set inclusion.
\end{proof}
Asymptotically in $t$, diffusion polynomials of order $\frac{2}{\sqrt{k}}t$ are indeed polynomials of degree at most $t$, and Theorem \ref{th:0} yields, for $f\in H^s_p(\mathcal{G}_{k,d})$,
\begin{equation*}
\dist(f,\Pol_t(\mathcal{G}_{k,d}))_{L_p}\asymp\dist(f,\Pi_t)_{L_p} \lesssim t^{-s} \|f\|_{H^s_p}.
\end{equation*}
For related further studies on $\dist(f,\Pol_t(\mathcal{G}_{k,d}))$, see \cite{Ragozin:1970zr}.

In view of Theorem \ref{pro:1}, we shall also define cubatures for $\Pol_t(\mathcal{G}_{k,d})$.
\begin{definition}\label{def:def 0}
  For $\{P_j\}_{j=1}^n\subset\mathcal{G}_{k,d}$ and positive weights
  $\{\omega_j\}_{j=1}^n$, we say that $\{(P_j,\omega_j)\}_{j=1}^n$ is a
  \emph{cubature for $\Pol_t(\mathcal{G}_{k,d})$} if 
  \begin{equation}\label{eq:def cub 001}
    \int_{\mathcal{G}_{k,d}} f(P)\mathrm d\mu_{k,d}(P) = \sum_{j=1}^n \omega_j f(P_j),\quad \text{for all } f\in  \Pol_t(\mathcal{G}_{k,d}).
  \end{equation}
  We say that the points $\{P_j\}_{j=1}^n\subset\mathcal{G}_{k,d}$ are a $t$-design for $\Pol_t(\mathcal{G}_{k,d})$ if \eqref{eq:def cub 001} holds for constant weights $\omega_1=\ldots=\omega_n=1/n$.
  \end{definition}
It turns out that the numerical construction of cubature points and $t$-designs for $\Pol_t(\mathcal{G}_{k,d})$ is somewhat easier than for $\Pi_t$ directly, which is outlined in the subsequent section.

\begin{remark}\label{rem:001}
Since $\Pol_t(\mathcal{G}_{k,d})$ are restrictions of ordinary polynomials, we observe $\Pol_{t_1}(\mathcal{G}_{k,d})\cdot \Pol_{t_2}(\mathcal{G}_{k,d})\subset \Pol_{t_1+t_2}(\mathcal{G}_{k,d})$. Thus, $\{(P^t_j,\omega_j^t)\}_{j=1}^{n_t}$ being cubatures for $\Pol_{2t}(\mathcal{G}_{k,d})$ yields that \eqref{eq:product cub} is satisfied when $\Pi_t$ is replaced with $\Pi_{s(t+1)-\epsilon}$. The latter implies that we must then also replace $\sigma_t(f,\{(P^t_j,\omega^t_j)\}_{j=1}^{n_t})$ with $\sigma_{s(t+1)-\epsilon}(f,\{(P^t_j,\omega^t_j)\}_{j=1}^{n_t})$ in Theorem \ref{th:m 0}. 
\end{remark}

For general $k$, the second set inclusion in Theorem \ref{pro:1} is sharp because $\lambda(t,0,\ldots,0)=4t^2+2t(d-2)$. The first set inclusion in Theorem \ref{pro:1} may only be optimal for $t$ being a multiple of $k$. To prepare for our numerical experiments later, we shall investigate on $\mathcal{G}_{2,d}$ more closely.

\begin{theorem}\label{th:new and latest}
For $k=2$, we obtain
\begin{equation*}
\Pi_{s(t+1)-\epsilon} \subset \Pol_t(\mathcal{G}_{2,d}), \quad \text{for all } 0<\epsilon<2s(t+1),
\end{equation*}
where $s(t)= \sqrt{2t^{2} + 2t(d -3) + 2(1 + (-1)^{t+1})}$.
\end{theorem}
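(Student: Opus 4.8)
The plan is to sharpen Lemma \ref{lemma:11} in the special case $k=2$. Recall that Theorem \ref{pro:1} derived its first inclusion by showing that $\lambda(\pi)<s^2(t+1)$ forces $|\pi|\le t$, where $s(t)^2$ was the continuous minimum $\tfrac4k t^2+2t(d-k-1)$ of the relaxed convex program, rounded up. The point for $k=2$ is that the parts $\pi_1\ge\pi_2\ge 0$ of a partition are \emph{integers}, so the true minimum of $\lambda(\pi)=2|\pi|d+4\sum_{i=1}^2\pi_i(\pi_i-i)=4(\pi_1^2+\pi_2^2)+2(d-2)\pi_1+2(d-3)\pi_2$ \emph{wait, let me recompute: } $4\pi_1(\pi_1-1)+4\pi_2(\pi_2-2)+2d(\pi_1+\pi_2)=4\pi_1^2+4\pi_2^2+2(d-2)\pi_1+2(d-4)\pi_2$, over integer partitions with $|\pi|=\pi_1+\pi_2\ge t+1$, is strictly larger than the continuous bound whenever $t+1$ is odd, because then $\pi_1=\pi_2$ is impossible and the optimal integer split is $\pi_1=\lceil (t+1)/2\rceil$, $\pi_2=\lfloor (t+1)/2\rfloor$, which costs an extra $4\cdot(\tfrac12)^2=1$ in the $\pi_1^2+\pi_2^2$ term relative to the halfway point. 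Carrying this through with $m:=t+1$ gives the claimed $s(t)^2=2t^2+2t(d-3)+2(1+(-1)^{t+1})$: the term $2(1+(-1)^{t+1})$ equals $4$ when $t$ is even (so $t+1$ odd) and $0$ when $t$ is odd (so $t+1$ even, and the continuous minimizer is already integral).

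Concretely, the first step is to prove the $k=2$ analogue of Lemma \ref{lemma:11}: for every partition $\pi$ with $l(\pi)\le 2$ and $|\pi|\ge t$ one has $\lambda(\pi)\ge 2t^2+2t(d-3)+2(1+(-1)^{t+1})$. By monotonicity of $\lambda$ in $|\pi|$ (each further box only increases $\lambda$, since the partial derivatives $\partial_{\pi_i}\lambda=8\pi_i+2d-4i\ge 8\pi_i+2d-8>0$ on the feasible region) it suffices to treat $|\pi|=t$ exactly. So fix $|\pi|=t$ and minimize $\lambda(\pi_1,t-\pi_1)$ over integers $\pi_1$ with $\lceil t/2\rceil\le\pi_1\le t$. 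Since $\lambda(\pi_1,t-\pi_1)$ is a quadratic in $\pi_1$ with positive leading coefficient and vertex at the half-integer/integer point $t/2$, the integer minimum is at $\pi_1=\lceil t/2\rceil$; substituting $\pi_1=(t+r)/2$ with $r\in\{0,1\}$, $r\equiv t\pmod 2$, and simplifying yields $\lambda=2t^2+2t(d-3)+2r^2+\text{(lower order in }r)$ — a short direct computation gives exactly $2t^2+2t(d-3)$ when $t$ is even and $2t^2+2t(d-3)+4$ when $t$ is odd, which one checks equals $2t^2+2t(d-3)+2(1+(-1)^{t+1})$ in both cases. Replacing $t$ by $t+1$ throughout gives $\lambda(\pi)\ge s(t+1)^2$ whenever $|\pi|\ge t+1$.

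The second step is then verbatim the end of the proof of Theorem \ref{pro:1}: the contrapositive says $\lambda(\pi)<s(t+1)^2$ implies $|\pi|<t+1$, i.e.\ $|\pi|\le t$ (integrality), hence $H_\pi(\G_{2,d})\subset\Pol_t(\G_{2,d})$ by \eqref{eq:pol is sum}; and for $0<\epsilon<2s(t+1)$ we have $(s(t+1)-\epsilon)^2<s(t+1)^2$, so $\Pi_{s(t+1)-\epsilon}=\spann\{\varphi_\ell:\lambda_\ell\le (s(t+1)-\epsilon)^2\}$ is spanned by eigenfunctions lying in $\bigoplus_{|\pi|\le t,\,l(\pi)\le 2}H_\pi(\G_{2,d})=\Pol_t(\G_{2,d})$. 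The main obstacle is purely the bookkeeping in step one: one must make sure the floor/ceiling split really is the integer minimizer (a one-line convexity argument in the single variable $\pi_1$, given that the relaxation's constraints $\pi_1\ge\pi_2\ge0$ are automatically satisfied at the optimum) and that the constant $2(1+(-1)^{t+1})$ is recovered exactly rather than merely up to $O(1)$; everything else is a transcription of the already-proved general case with $k=2$ and the improved constant in place of the ceiling.
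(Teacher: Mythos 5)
Your proposal is correct and follows essentially the same route as the paper: reduce by monotonicity of $\lambda$ in each part to partitions with $|\pi|=t$, note that the resulting one-variable quadratic is minimized at the integer split $(\lceil t/2\rceil,\lfloor t/2\rfloor)$ whose eigenvalue is exactly $s^2(t)$, and then conclude verbatim as in the proof of Theorem \ref{pro:1}. The only slips are in your heuristic aside (the vertex of the quadratic sits at $t/2-\tfrac14$, not $t/2$, and the integer split costs an extra $4$, not $1$), but your actual stated computation ($+0$ for $t$ even, $+4$ for $t$ odd, matching $2(1+(-1)^{t+1})$) is correct, so the argument stands.
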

Note that $s(t)$ in Theorem \ref{th:new and latest} satisfies $s^2(t)=\lambda(\lceil \tfrac t2 \rceil, \lfloor \tfrac t2
    \rfloor )$. 
It matches the definition in Theorem \ref{pro:1} provided that $t$ is even. For odd $t$, $s(t)$ in Theorem \ref{th:new and latest} is indeed larger than in Theorem \ref{pro:1}, and the difference of the squares is $4$.
\begin{proof}
Any
  partition $\pi$ of length $k=2$ with $|\pi| = t$ can be parameterized by
  $\pi(r) = (t - r, r)$, $r = 0,\dots, \lfloor \tfrac t2 \rfloor$. We have checked that $\lambda(\pi(r))$ is a quadratic function in $r$, which is strictly decreasing in $r$. Observing
  furthermore that $\lambda(\pi') \ge \lambda(\pi)$ if $\pi_{i}' \ge \pi_{i}$,
  $i=1,\dots,k$, we infer that $|\pi|\geq t$ implies
  \begin{equation}
    \label{eq:lowlambda2}
    \lambda(\pi) \ge \lambda(\lceil \tfrac t2 \rceil, \lfloor \tfrac t2
    \rfloor ) = s^2(t).
  \end{equation}
  Therefore, $\lambda(\pi)<s^2(t+1)$ implies $|\pi|\leq t$ since $|\pi|$ and $t$ are integers.
\end{proof}

\begin{example}\label{ex:first one}
The particular case $\mathcal G_{2,4}$ yields 
\begin{equation*}
\Pi_{s(t+1)-\epsilon}\subset \Pol_t(\mathcal{G}_{2,4}), \quad\text{ for all $0<\epsilon<2s(t+1)$,}
\end{equation*}
where $s(t):=\sqrt{2t^{2} + 2t + 2(1 + (-1)^{t+1})}$, which implies 
\begin{equation*}
\Pi_{s(t+1)-\epsilon} \cdot \Pi_{s(t+1)-\epsilon} \subset  \Pol_{2t}(\mathcal{G}_{2,4}).
\end{equation*}
Thus, given a cubature for $\Pol_{2t}(\mathcal{G}_{2,4})$, the condition \eqref{eq:product cub} in Theorem \ref{th:m 0} is satisfied with respect to $\Pi_{s(t+1)-\epsilon}$. 
\end{example}

\subsection{Worst case error of integration on Grassmannians}\label{sec:cubs}

Given some subspace $\mathcal{H}$ of continuous functions on $\mathcal{G}_{k,d}$ the \emph{worst case error} of integration (with
respect to some norm $\|\cdot\|$ on $\mathcal{H}$) for points $\{P_{j}\}_{j=1}^{n}\subset\mathcal{G}_{k,d}$ and weights $\{\omega_{j}\}_{j=1}^{n}$ is defined by 
\begin{equation*}
  \mathrm{wce}_{\mathcal{H},\|\cdot\|}(\{(P_{j},\omega_{j})\}_{j=1}^{n}) := \sup_{\substack{f \in \mathcal{H}\\
      \|f\|=1} } \Big| \int_{\G_{k,d}}f(P) \mathrm d\mu_{k,d}(P) -
  \sum_{j=1}^{n} \omega_{j} f(P_{j}) \Big|,
\end{equation*}
see also \cite{Graf:2013zl,Nowak:2010rr}. 
If $\mathcal{H}=H_K$ is a reproducing kernel Hilbert space, whose reproducing kernel $K$ is
\begin{equation*}
K(P,Q) = \sum_{ l(\pi)\leq k} r_{\pi} K_{\pi}(P,Q)
  = \sum_{ l(\pi)\leq k} r_{\pi} \sum_{\lambda_{\ell} =
    \lambda(\pi)}\varphi_{\ell}(P) \varphi_{\ell}(Q) , \qquad P,Q \in \G_{k,d},
\end{equation*}
with $r_{\pi} \geq  0$, $|\pi| \ge 0$, and sufficient decay of the coefficients, then the associated inner product is
\begin{equation*}
  (f,g)_{K} = \sum_{\substack{l(\pi)\leq k\\ r_\pi>0}} r_{\pi}^{-1} \sum_{\lambda_{\ell} = \lambda(\pi)}\hat f(\ell) \overline{\hat g(\ell)},
\end{equation*}
and the Riesz representation theorem yields
\begin{align}
\mathrm{wce}_{H_K,\|\cdot\|_{K}}(\{(P_{j}, \omega_{j})\}_{j=1}^{n})^{2} 
    & = \sum_{l(\pi)\leq k} r_{\pi} \sum_{\lambda_{\ell} =
      \lambda(\pi)} \Big|\int_{\G_{k,d}}\varphi_{\ell}(P) \mathrm d \mu_{k,d}(P) - \sum_{j=1}^{n}w_{j}\varphi_{\ell}(P_{j}) \Big|^{2} \nonumber \\
      & = r_{(0)} - 2 r_{(0)} \sum_{j=1}^{n} \omega_{j} +
    \sum_{i,j=1}^{n}\omega_{i}\omega_{j} K(P_{i},P_{j}).\label{eq:wce_formula}
\end{align}
Note that the worst case error is a weighted $\ell_{2}$-average of the
integration errors of the basis functions $\varphi_{0},\varphi_1,\varphi_2,\ldots$.

Recall, for instance, $H^s_2(\mathcal{G}_{k,d})$ is a Hilbert space with inner product
\begin{equation}\label{eq:inner product on W}
\langle f,g\rangle_{H^s_2} = \sum_{\ell=0}^\infty (1+\lambda_\ell)^{s} \hat{f}(\ell)\overline{\hat{g}(\ell)},\quad f,g\in H^s_2(\mathcal{G}_{k,d}),
\end{equation}
and the Bessel kernel on the Grassmannian is $K^s_B:\mathcal{G}_{k,d}\times \mathcal{G}_{k,d} \rightarrow \R$ with
\begin{align*}
K^s_B(P,Q)&=\sum_{\ell=0}^\infty (1+\lambda_\ell)^{-s}\varphi_\ell(P)\varphi_\ell(Q)\\
&=\sum_{l(\pi)\leq k}^\infty (1+\lambda(\pi))^{-s}J^{\frac k2, \frac d 2}_\pi(y_1(P,Q),\ldots,y_k(P,Q)).
\end{align*}
If $s>k(d-k)/2$, then it is easily checked that $K^s_B$ is the reproducing kernel for $H^s_2(\mathcal{G}_{k,d})$ with respect to the inner product  \eqref{eq:inner product on W}, see also \cite{Brandolini:2014oz}.

Note that the polynomial space $\Pol_{t}(\G_{k,d})$ is also a reproducing kernel Hilbert space. Indeed, 
given a partition $\pi$ with $|\pi|\leq t$ and $l(\pi)\leq k$, the reproducing kernel of $H_\pi(\mathcal{G}_{k,d})$ with respect to the $L_2$ inner product is $K_\pi$ in \eqref{eq:rep of H etc}. Due to \eqref{eq:pol is sum}, the reproducing kernels for $\Pol_{t}(\G_{k,d})$ are exactly 
\[
  R_{t}(P,Q) = \sum_{\substack{|\pi|\le t \\ l(\pi)\leq k}} r_{\pi} K_{\pi}(P,Q)
  = \sum_{\substack{|\pi|\le t \\ l(\pi)\leq k}} r_{\pi} \sum_{\lambda_{\ell} =
    \lambda(\pi)}\varphi_{\ell}(P) \varphi_{\ell}(Q) , \qquad P,Q \in \G_{k,d},
\]
with $r_{\pi} > 0$, $|\pi| \ge 0$. Note that $R_t$ is indeed reproducing as a finite linear combination with nonnegative coefficients of reproducing kernels, and it reproduces  $\Pol_{t}(\G_{k,d})$ because of \eqref{eq:pol is sum} and none of the coefficients vanish. 
Now, by Definition \ref{def:def 0} any cubature for $\Pol_{t}(\G_{k,d})$ has
zero worst case error independent of the chosen norm, and thus
independent of $R_{t}$. A particularly simple reproducing kernel for $\Pol_{t}(\G_{k,d})$ is
\[
  R_{t}(P,Q) = \tr(PQ)^{t}, \qquad P,Q \in \G_{k,d},
\] 
see, for instance, \cite{Ehler:2014zl}.  
Hence, formula \eqref{eq:wce_formula} provides us with a simple method to
numerically compute cubature points by some minimization method. In particular,
$t$-designs $\{ (P_{j},\frac1n)\}_{j=1}^{n}$ are constructed by minimizing 
\[
  \frac{1}{n^{2}} \sum_{i,j=1}^{n} \tr(P_{i},P_{j})^{t} \ge
  \int_{\G_{k,d}}\int_{\G_{k,d}} \tr(P,Q)^{t} \mathrm d \mu_{k,d}(P) \mathrm
  d \mu_{k,d}(Q)
\]
and checking for equality, which implies $\mathrm{wce}_{\Pol_t,\|\cdot\|_{R_{t}}}(\{(P_{j},\omega_j)\}_{j=1}^n)=0$.


\section{Numerical experiments}\label{sec:nums}
We now aim to illustrate  theoretical results of the previous sections. The projective space $\mathcal{G}_{1,d}$ can be dealt with approaches for the sphere by identifying $x$ and $-x$. The space $\mathcal{G}_{d-1,d}$ can be identified with $\mathcal{G}_{1,d}$, so that the first really new example to be considered here is $\mathcal{G}_{2,4}$. 


We computed points $\{P_{j}^{t}\}_{j=1}^{n_{t}}\subset\G_{2,4}$, for 
$t=1,\dots,14$, with worst case error
\begin{equation*}
\mathrm{wce}_{\Pol_t,\|\cdot\|_{L_2}}(\{(P_j^t,1/n_t)\}_{j=1}^{n_t}) < 10^{-7}
\end{equation*}
by a nonlinear conjugate gradient method on manifolds, cf.~\cite[Section 3.3.1]{Graf:2013zl}, see also \cite[Section 8.3]{Absil:2008qr}. Although the worst case error may not be zero exactly, we shall refer to $\{P_{j}^{t}\}_{j=1}^{n_{t}}$ in the following simply as $t$-designs. Note that $\mathcal{G}_{2,4}$ has dimension $\dim(\G_{2,4})=4$, so that the number of
cubature points must satisfy $n_{t} \gtrsim t^{4}$. Indeed we chose
\[
  n_{t} := \Big\lfloor \frac13 \dim(\Pol_{t}(\mathcal{G}_{2,4})) \Big\rfloor = \Big\lfloor  \tfrac13 (t+1)^2(1+t+\tfrac12 t^2) \Big\rfloor.
\]
We emphasize that for $t=14$ we computed $n_{14}=8.475$ projection matrices
which almost perfectly integrate $25.425$ polynomial basis functions.

\subsection{Integration}
In what follows we consider two positive definite kernels 
\[
  \begin{aligned}
    K_{1}(P,Q) & = \sqrt{(2-\tr(PQ))^{3}} + 2 \tr(PQ), \\
    K_{2}(P,Q) & =  \exp(\tr(PQ) - 2).
  \end{aligned}
\]
It can be checked by comparison to the Bessel kernel, cf.~\cite{Brandolini:2014oz}, that the reproducing kernel Hilbert space $H_{K_{1}}$ equals
the Bessel potential space $H^{\frac72}_2(\G_{2,4})$, i.e., the corresponding norms
are comparable. In contrast, the reproducing kernel Hilbert space $H_{K_{2}}$ is
contained in the Bessel potential space $H^{s}_2(\G_{2,4})$ for any $s>2$. The worst case errors can be computed by
\[
  \begin{aligned}
    \mathrm{wce}_{H_{K_{1}},\|\cdot\|_{K_{1}}}(\{(P_{j},\frac1n)\}_{j=1}^{n})^{2} & = \frac{1}{n^{2}} \sum_{i,j=1}^{n} K_{1}(P_{i},P_{j}) - \big(2 + \frac{74}{75} \sqrt 2 - \frac{2}{5} \log(1+\sqrt 2)\big), \\
    \mathrm{wce}_{H_{K_{2}},\|\cdot\|_{K_{2}}}(\{(P_{j},\frac1n)\}_{j=1}^{n})^{2} & = \frac{1}{n^{2}} \sum_{i,j=1}^{n} K_{2}(P_{i},P_{j}) - \exp(-1) \mathrm{Shi}(1),
  \end{aligned}
\]
where $\mathrm{Shi}(x) = \int_{0}^{x} \frac{\sinh(t)}{t} \mathrm d t$ is the
hyperbolic sine integral.

In view of illustrating Proposition \ref{th:random inde}, note that random $P\in\mathcal{G}_{k,d}$ distributed according to $\mu_{k,d}$ can be derived by $P := Z(Z^\top Z)^{-1}Z^\top$, where $Z\in\R^{d\times k}$ with entries that are independently and identically standard normally distributed, cf.~\cite[Theorem 2.2.2]{Chikuse:2003aa}.

Figure~\ref{fig:random} depicts clearly the superior integration quality of the
computed cubature points over random sampling. Moreover, it can be seen that the
theoretical results in Proposition \ref{th:random inde} and Theorem \ref{th:brandolini} with \eqref{eq:general cub fomula} are in perfect accordance with
the numerical experiment, i.e., the integration errors of the random points
scatter around the expected integration error and cubature points achieve the
optimal rate of $n^{-\frac78}$ for functions in 
$H^{\frac72}_2(\G_{2,4})$.

In Figure~\ref{fig:smooth} we aim to show the contrast between the integration
of functions in $H_{K_{1}}$ and $H_{K_{2}}$ by using the computed $t$-designs.
We know by Theorem \ref{th:brandolini} that the sequence of $t$-designs with a
number of cubature points $n_{t} \asymp t^{4}$ is a QMC system for any $s>2$.
Since $H_{K_{2}}$ is contained in any Bessel potential space $H^{s}_2(\G_{2,4})$, for $s>2$, we
expect a super linear behavior in our logarithmic plots. Indeed,
Figure~\ref{fig:smooth} confirms our expectations. For $t\ge 11$, the effect of
the accuracy of the $t$-designs used becomes significant for integration of
smooth functions. For that reason, we added the dashed red line, which
represents the accuracy $10^{-7}$ of the computed $t$-designs.

\begin{figure}[t]
\begin{center}
 \includegraphics[width=9.5cm]{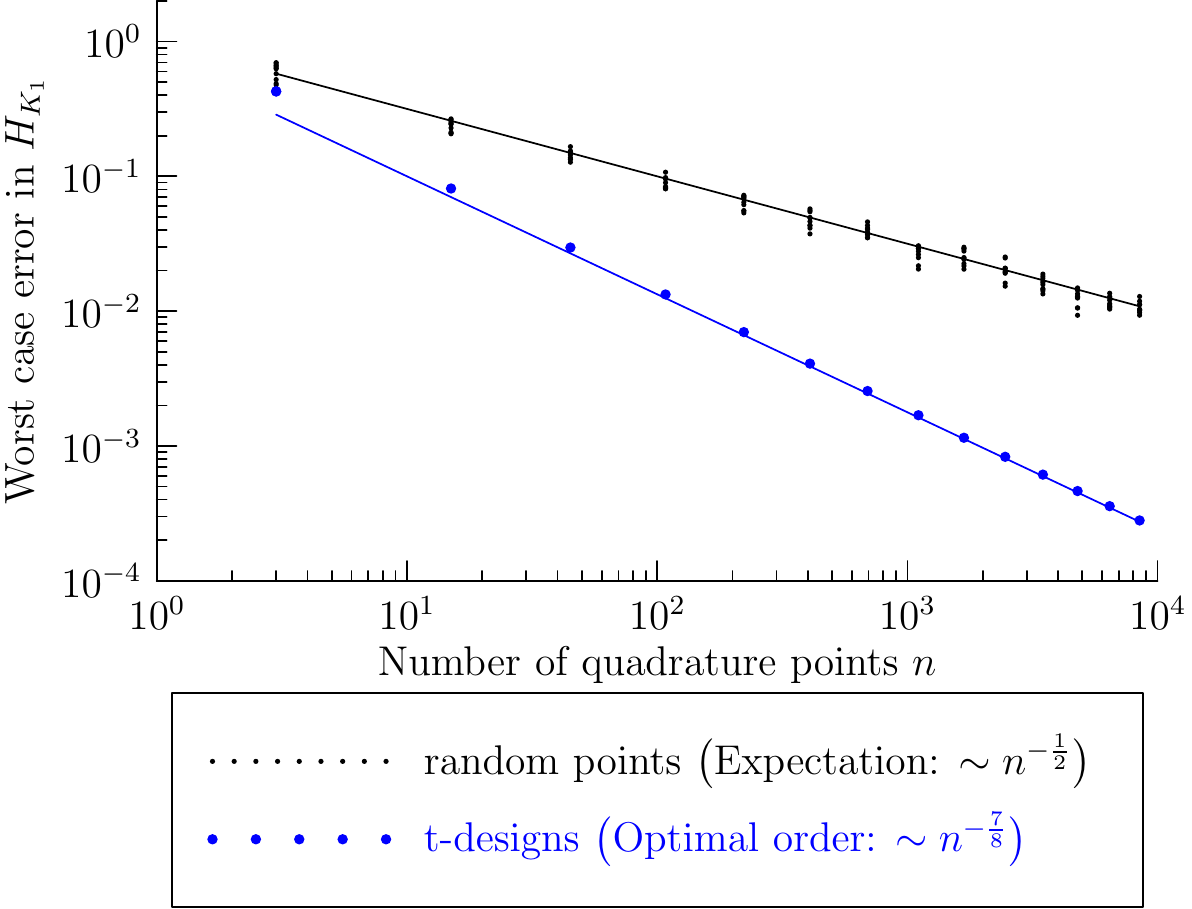}
\end{center}
\caption{ Random sampling according to $\mu_{2,4}$ vs. integration by $t$-desings.}\label{fig:random}
\end{figure}

\begin{figure}[t]
\begin{center}
 \includegraphics[width=9.5cm]{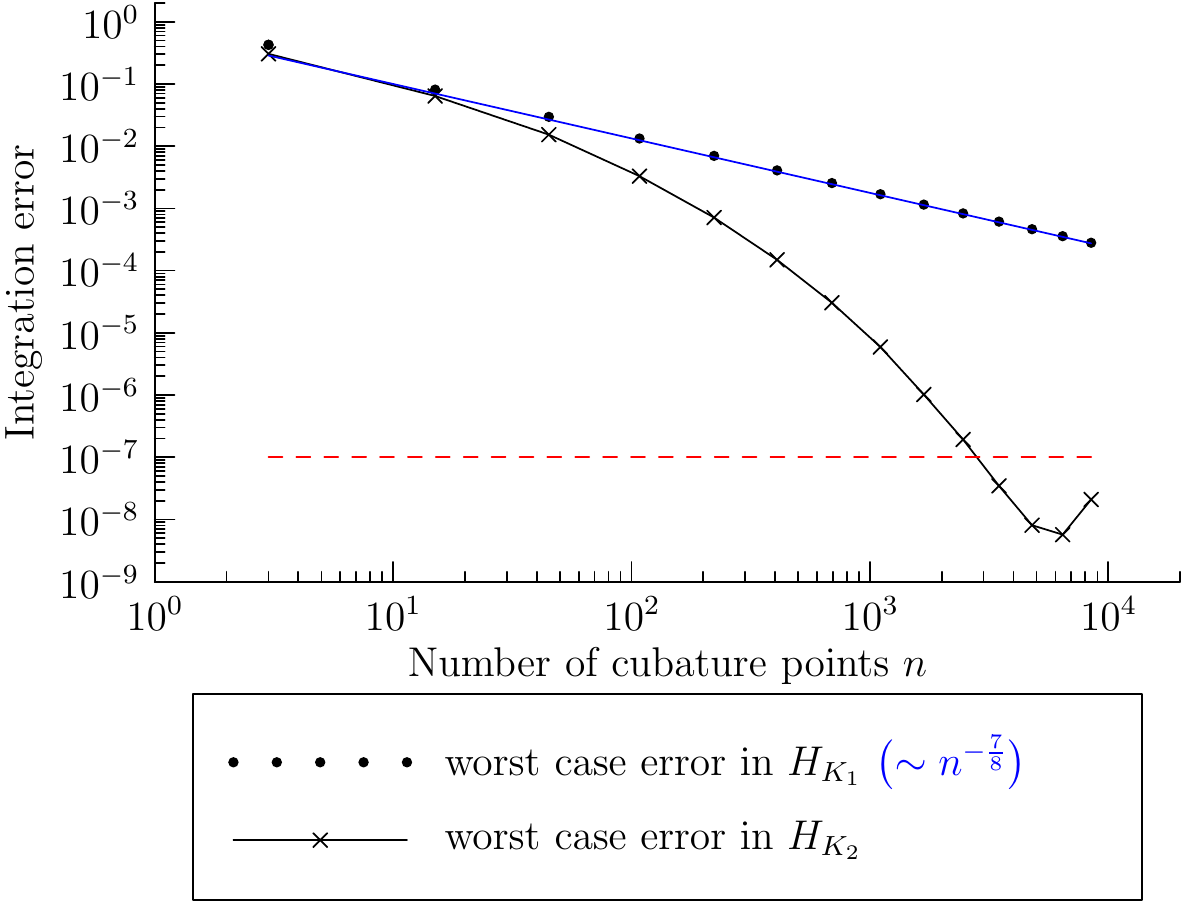}
\end{center}
 \caption{ Integration of smooth vs. integration of nonsmooth functions.}\label{fig:smooth}
\end{figure}

\subsection{Approximation}

Similar, as in the previous section we aim to approximate a smooth and a nonsmooth function, namely
\[
  f_{1}(P) = K_{1}(I_{2}, P), \qquad f_{2}(P) = K_{2}(I_{2},P), \qquad P \in
  \G_{2,4},
\]
where $K_{1}$, $K_{2}$ are from the previous section and $I_{2}$ is a projection
matrix with $2$ ones on the upper left diagonal. This time we observe that the
function $f_{1}$ is contained in $H^3_\infty(\mathcal{G}_{2,4})$ but
$f\not\in H^{3+\epsilon}_\infty(\mathcal{G}_{2,4})$, for all $\epsilon>0$. For
the smooth function $f_{2}$, we have $f_{2} \in H^{s}_\infty(\G_{2,4})$, for any
$s>0$.

Since the computed $t$-designs are with respect to $\Pol_{t}(\G_{2,4})$ and not
$\Pi_{t}(\G_{2,4})$, we need an additional scaling of $s(t)$ in $\sigma_{t}$. 
According to Example \ref{ex:first one}, the choice 
\begin{equation*}
s(t) = \sqrt{2(t^{2}+3t + 3+(-1)^{t})} -\epsilon\asymp t \sqrt 2,
\end{equation*}
for small $\epsilon >0$, 
yields $\Pi_{s(t)} \cdot \Pi_{s(t)}\subset \Pol_{2t}(\mathcal{G}_{2,4})$. For numerical experiments, we take $\epsilon$ to be smaller than the machine precision, so that it is effectively zero. Hence, in accordance with Theorem \ref{th:m 0}, we use the
following kernel based approximation
\[
  \sigma_{s(t)}(f,X_{2t}) =\frac{1}{n_{2t}}\sum_{j=1}^{n_{2t}} f(P_{j}) \sum_{l(\pi)\leq 2}
  h(s(t)^{-2}\lambda(\pi)) K_{\pi}(P_{j},\cdot ),
\] 
where  $X_{2t}=\{(P^{2t}_j,1/n_{2t})\}_{j=1}^{n_{2t}}$ and 
\begin{equation*}
h(x) = \begin{cases}
\big(1 + \exp(\frac{3 - 4 x}{2 - 6 x + 4 x^2})\big)^{-1},& 1/2<x<1,\\
1 ,&x\leq 1/2,\\
0, &\text{otherwise.}
\end{cases}
\end{equation*}

The approximation error is determined by randomly sampling altogether $50 000$ points. The first 25000 are pseudo random according to $\mu_{2,4}$. 
Since $f_{1}$ has a nonsmooth point at $I_{2}$ the maximal error is expected
around this point. Therefore, we sampled the other $25000$ from normally distributed
points around that point $I_{2}$ with variance $0.15$ and $0.5$ in the
matrix entries, i.e., we choose $Z\in\R^{4\times 4}$ with independent and identically distributed entries according to a normal distribution with mean zero and variance $0.15$ and $0.5$, respectively, and then project $I_2+Z$ onto $\mathcal{G}_{2,4}$, which we accomplished by a QR-decomposition in Matlab. 

In Figure~\ref{fig:approx}, we can observe the predicted decay in Theorem
\ref{th:m 0} for the function $f_{1} \in H^{3}_\infty(\G_{2,4})$.
Furthermore, as expected for the smooth function $f_{2}$, the error appears to
decrease super linearly.

\begin{figure}[t]
\begin{center}
 \includegraphics[width=9.5cm]{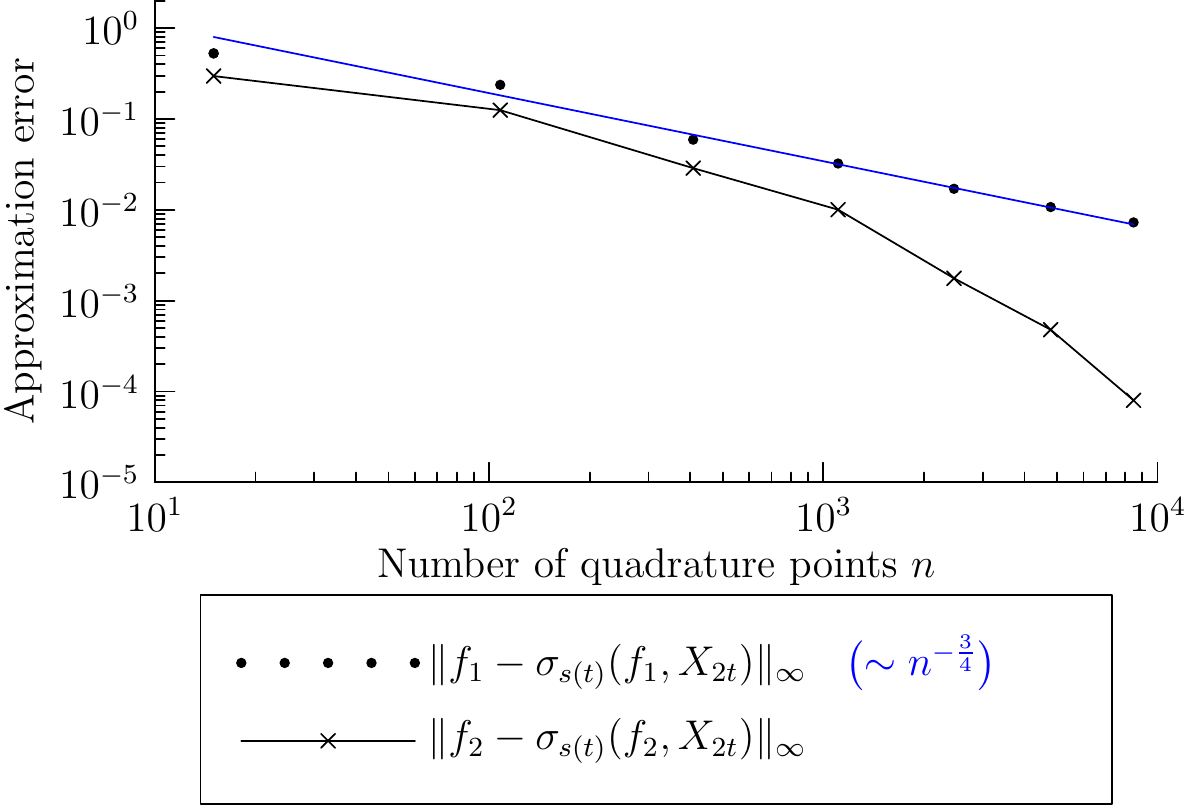}
\end{center}
\caption{ Approximation of a smooth vs. approximation of a nonsmooth
  function.}\label{fig:approx}
\end{figure}

\begin{acknowledgement}
The authors have been funded by the Vienna Science and Technology Fund (WWTF) through project VRG12-009.
\end{acknowledgement}

\bibliographystyle{amsplain}
\bibliography{../biblio_ehler2}

\providecommand{\bysame}{\leavevmode\hbox to3em{\hrulefill}\thinspace}
\providecommand{\MR}{\relax\ifhmode\unskip\space\fi MR }
\providecommand{\MRhref}[2]{%
  \href{http://www.ams.org/mathscinet-getitem?mr=#1}{#2}
}
\providecommand{\href}[2]{#2}
\begin{thebibliography}{10}

\bibitem{Absil:2008qr}
P.-A. Absil, R.~Mahony, and R.~Sepulchre, \emph{Optimization algorithms on
  matrix manifolds}, Princeton University Press, 2008.

\bibitem{Bachoc:2006aa}
C.~Bachoc, \emph{Linear programming bounds for codes in {G}rassmannian spaces},
  IEEE Trans.~ Inf.~ Th. \textbf{52} (2006), no.~5, 2111--2125.

\bibitem{Bachoc:2004fk}
C.~Bachoc, E.~Bannai, and R.~Coulangeon, \emph{Codes and designs in
  {G}rassmannian spaces}, Discrete Mathematics \textbf{277} (2004), 15--28.

\bibitem{Bachoc:2002aa}
C.~Bachoc, R.~Coulangeon, and G.~Nebe, \emph{Designs in {G}rassmannian spaces
  and lattices}, J.~ Algebraic Combinatorics \textbf{16} (2002), 5--19.

\bibitem{Bondarenko:2011kx}
A.~Bondarenko, D.~Radchenko, and M.~Viazovska, \emph{Optimal asymptotic bounds
  for spherical designs}, Ann.~ Math. \textbf{178} (2013), no.~2, 443--452.

\bibitem{Brandolini:2014oz}
L.~Brandolini, C.~Choirat, L.~Colzani, G.~Gigante, R.~Seri, and G.~Travaglini,
  \emph{Quadrature rules and distribution of points on manifolds}, Annali della
  Scuola Normale Superiore di Pisa - Classe di Scienze \textbf{XIII} (2014),
  no.~4, 889--923.

\bibitem{Brauchart:fk}
J.~Brauchart, E.~Saff, I.~H. Sloan, and R.~Womersley, \emph{{QMC} designs:
  Optimal order quasi {M}onte {C}arlo integration schemes on the sphere},
  Math.~ Comp. \textbf{83} (2014), 2821--2851.

\bibitem{Chikuse:2003aa}
Y.~Chikuse, \emph{Statistics on special manifolds}, Lecture Notes in
  Statistics, Springer, New York, 2003.

\bibitem{Davis:1999zg}
A.~W. Davis, \emph{Spherical functions on the {G}rassmann manifold and
  generalized {J}acobi polynomials -- part 1}, Lin.~Alg.~Appl. \textbf{289}
  (1999), no.~1-3, 75--94.

\bibitem{Harpe:2005fk}
P.~de~la Harpe and C.~Pache, \emph{Cubature formulas, geometrical designs,
  reproducing kernels, and {M}arkov operators}, Infinite groups: geometric,
  combinatorial and dynamical aspects (Basel), vol. 248, Birkh\"auser, 2005,
  pp.~219--267.

\bibitem{Dumitriu:2007ax}
I.~Dumitriu, A.~Edelman, and G.~Shuman, \emph{{MOPS}: {M}ultivariate orthogonal
  polynomials (symbolically)}, Journal of Symbolic Computation \textbf{42}
  (2007), no.~6, 587--620.

\bibitem{Ehler:2012fk}
M.~Ehler, F.~Filbir, and H.~N. Mhaskar, \emph{Locally learning biomedical data
  using diffusion frames}, J. Comput. Biol. \textbf{19} (2012), no.~11,
  1251--64.

\bibitem{Ehler:2014zl}
M.~Ehler and M.~Gr{\"a}f, \emph{Harmonic decompositions on unions of
  {G}rassmannians}, arXiv (2016).

\bibitem{Filbir:2010aa}
F.~Filbir and H.~N. Mhaskar, \emph{A quadrature formula for diffusion
  polynomials corresponding to a generalized heat kernel}, J.~ Fourier Anal.~
  Appl. \textbf{16} (2010), no.~5, 629--657.

\bibitem{Filbir:2011fk}
\bysame, \emph{{M}arcinkiewicz--{Z}ygmund measures on manifolds}, J.~
  Complexity \textbf{27} (2011), no.~6, 568--596.

\bibitem{Geller:2011fk}
D.~Geller and I.~Z. Pesenson, \emph{Band-limited localized {P}arseval frames
  and {B}esov spaces on compact homogeneous manifolds}, J.~ Geom.~ Anal.
  \textbf{21} (2011), no.~2, 334--371.

\bibitem{Geller:2013uk}
\bysame, \emph{$n$-widths and approximation theory on compact {R}iemannian
  manifolds}, Commutative and Noncommutative Harmonic Analysis and Applications
  (A.~Mayeli, P.~E.~T. Jorgensen, and G.~\'Olafsson, eds.), vol. 603,
  Contemporary Mathematics, 2013.

\bibitem{Geller:2012fk}
\bysame, \emph{Kolmogorov and linear widths of balls in {S}obolev spaces on
  compact manifolds}, Math.~ Scand. \textbf{115} (2014), no.~1, 96--122.

\bibitem{Gia:2006fk}
Q.~T.~Le Gia and H.~N. Mhaskar, \emph{Polynomial operators and local
  approximation of solutions of pseudo-differential operators on the sphere},
  Numerische Mathematik \textbf{103} (2006), 299--322.

\bibitem{Gia:2008fk}
\bysame, \emph{Localized linear polynomial operators and quadrature formulas on
  the sphere}, SIAM J.~ Numer.~ Anal. \textbf{47} (2008), no.~1, 440--466.

\bibitem{Graf:2013zl}
M.~Gr\"af, \emph{Efficient algorithms for the computation of optimal quadrature
  points on {R}iemannian manifolds}, Universit\"atsverlag Chemnitz, 2013.

\bibitem{Hormander:1983gf}
L.~H\"ormander, \emph{The analysis of linear partial differential operators,
  {I, II, III, IV}}, Springer Verlag, 1983-1985.

\bibitem{James:1974aa}
A.~T. James and A.~G. Constantine, \emph{Generalized {J}acobi polynomials as
  spherical functions of the {G}rassmann manifold}, Proc.~ London Math.~ Soc.
  \textbf{29} (1974), no.~3, 174--192.

\bibitem{Maggioni:2008fk}
M.~Maggioni and H.~N. Mhaskar, \emph{Diffusion polynomial frames on metric
  measure spaces}, Appl.~ Comput.~ Harmon.~ Anal. \textbf{24} (2008), no.~3,
  329--353.

\bibitem{Mhaskar:2010kx}
H.~N. Mhaskar, \emph{Eignets for function approximation on manifolds}, Appl.~
  Comput.~ Harmon.~ Anal. \textbf{29} (2010), 63--87.

\bibitem{Nowak:2010rr}
E.~Novak and H.~Wozniakowski, \emph{{T}ractability of {M}ultivariate
  {P}roblems. {V}olume {II}}, EMS Tracts in Mathematics, vol.~12, EMS
  Publishing House, Z\"urich, 2010.

\bibitem{Pesenson:2012fp}
I.~Z. Pesenson and D.~Geller, \emph{Cubature formulas and discrete fourier
  transform on compact manifolds}, From Fourier Analysis and Number Theory to
  Radon Transforms and Geometry, vol.~28, 2012, pp.~431--453.

\bibitem{Ragozin:1970zr}
D.~L. Ragozin, \emph{Polynomial approximation on compact manifolds and
  homogeneous spaces}, Trans.~ Amer.~ Math.~ Soc. \textbf{150} (1970), 41--53.

\bibitem{Seymour:1984bh}
P.~Seymour and T.~Zaslavsky, \emph{Averaging sets: a generalization of mean
  values and spherical designs}, Advances in Math. \textbf{52} (1984),
  213--240.

\bibitem{Stein:1970yg}
E.~M. Stein, \emph{Singular integrals and differentiability properties of
  functions}, Princeton University Press, 1970.

\bibitem{Triebel:1992aa}
H.~Triebel, \emph{{T}heory of {F}unction {S}paces {II}}, Birkh{\"{a}}user,
  Basel, 1992.

\end{thebibliography}
\end{document}